\documentclass{article}
\usepackage[utf8]{inputenc}
\usepackage{mathtools}
\usepackage{amsfonts,hyperref,url}
\usepackage{amssymb}
\usepackage{amsmath,amsthm}
\usepackage{upgreek }
\usepackage{bbm}
\usepackage{esint}
\usepackage{appendix}

\newcommand{\eq}[1]{\begin{align}#1\end{align}}

\newcommand{\ba}{\begin{eqnarray}}
\newcommand{\ea}{\end{eqnarray}}
\newcommand{\epsi}{\varepsilon}
\def\d{{\rm d}}

\newtheorem{theorem}{Theorem}
\numberwithin{theorem}{section}

\newtheorem{lemma}[theorem]{Lemma}
\newtheorem{corollary}[theorem]{Corollary}

\newtheorem{proposition}[theorem]{Proposition}

\newtheorem*{theorem*}{Theorem}
\newtheorem*{lemma*}{Lemma}
\newtheorem*{corollary*}{Corollary}
\newtheorem*{proposition*}{Proposition}
\newtheorem*{problem*}{Problem}
\newtheorem*{conjecture*}{Conjecture}

\newtheorem{definition}[theorem]{Definition}
\newtheorem{hyp}{Assumption}

\newtheorem{remark}[theorem]{Remark}

\newcommand{\Rr}{{\mathbb{R}}}

\title{A pathwise Ito formula \\for weakly differentiable functions}
\author{Anna ANANOVA and Rama CONT}
\date{2026}

\begin{document}

\maketitle

  \begin{abstract}
We extend F\"ollmer's pathwise It\^o formula to weakly
differentiable functions of continuous paths with finite quadratic
variation along a   sequence of partitions. For each such
path $\omega$, we introduce a path-dependent Sobolev space
$W_{\omega,\pi}^{2}$ defined through smooth approximation of the weak Hessian in a seminorm generated
by discrete weighted occupation measures of the path  $\omega$.
For $F\in W_{\omega,\pi}^{2}$, we construct the pathwise integral
$\int \nabla F(\omega)\,d^{\pi}\omega$ and the covariation
$[\nabla F(\omega),\omega]_{\pi}$, and prove the change-of-variable
formula
$$
F(\omega(t))-F(\omega(0))
=
\int_{0}^{t}\nabla F(\omega(s))\,d^{\pi}\omega(s)
+
\frac12[\nabla F(\omega),\omega]_{\pi}(t).
$$
For Brownian motion, we show that functions in
$W^{2+,p}(\mathbb{R}^{d})\cap W^{2,1}(\mathbb{R}^{d})$ belong almost
surely to the corresponding path-dependent space, outside a polar
exceptional set of starting points. If, in addition,
$F\in W_{\mathrm{loc}}^{1,2}(\mathbb{R}^{d})$, the pathwise integral
agrees with the stochastic It\^o integral, yielding a pathwise version
of the multidimensional F\"ollmer--Protter formula. 
\end{abstract}
Keywords: pathwise integration, Dirichlet process, occupation measure, Ito formula, quadratic covariation, Brownian motion, Sobolev trace,  rough analysis, Follmer-Protter-Shiryaev formula.
\newpage
\tableofcontents
\newpage
\section{Ito formulae with covariation term}

Ito's stochastic change of variable formula is a cornerstone in the development of stochastic analysis  \cite{ikedawatanabe,P05} and has been extended in various directions. For a function $f\in C^2(\mathbb{R}^d)$   of a $\mathbb{R}^d-$valued semimartingale $X$ it takes the standard form
\begin{align}\label{Ito}
f(X_T) = f(X_0) + \int_0^T \nabla f(X_s)dX_s + \frac{1}{2}\int_0^T \langle \nabla^2 f(X_s), d[X]_s\rangle
\end{align}
where equality is almost-sure with respect to the law of   $X$.

An important class of extensions of \eqref{Ito} is to  functions
whose regularity is specified in terms of Sobolev spaces. 
F\"{o}llmer, Protter and Shiryaev \cite{FPS1995}  proved a change of variable formula for functions in $W^{1,2}(\Rr)$ applied to a scalar  Brownian motion $B$:
\begin{align}\label{FPSIto}
f(B_T) = f(B_0) + \int_0^T  f'(B_s)dB_s + \frac{1}{2}[ f'(B_\cdot), B_\cdot](T),
\end{align} where the term involving the second derivative is replaced with the covariation  term $[f'(B), B]$. We refer to \eqref{FPSIto} as an Ito formula with covariation term.
This result was extended by 
F\"{o}llmer and Protter \cite{follmerprotter2000}  to multidimensional Brownian motion, then by Bardina and Jolis \cite{bardina1997} to elliptic diffusions  using  Malliavin calculus  and by
Moret and Nualart \cite{moret2001generalization} to square integrable martingales. 
Eisenbaum \cite{eisenbaum2001ito} gives an alternative formulation of the covariation in terms of semimartingale local time. 
More recently, Ibragimov, Smorodina and Faddeev~\cite{ibragimov2024}
 showed that, in the one-dimensional case,   under $F'\in L^2_{\mathrm{loc}}(\mathbb R)$,
the second-order term 
$$
\int_0^t F''(B_s)\,ds,
$$
where $F''$ is  a distribution, may be defined by regularization as a limit in probability.
The methods used   in these references \cite{bardina1997,FPS1995,follmerprotter2000,ibragimov2024,moret2001generalization} are probabilistic and use properties of Brownian motion and Gaussian estimates in an essential way.

On the other hand, in earlier work \cite{follmer1981}, 
F\"{o}llmer proved a pathwise version of the Ito formula \eqref{Ito} for paths with finite quadratic variation along a sequence of partitions, showing that the Ito formula is essentially an analytical, not probabilistic, change of variable formula for a well-defined class of paths. Extensions of F\"{o}llmer's pathwise approach 
for functions with Sobolev regularity have been studied  by Wuermli \cite{wuermli1980},
Bertoin \cite{bertoin1987}  Davis, Obloj and Siorpaes \cite{davis2014}, Perkowski and Promel \cite{perkowski2015}.

\paragraph{Contribution}

We use F\"ollmer's pathwise approach to derive change of variable formulas for weakly differentiable functions of paths with finite quadratic variation, obtaining pathwise analogues of the results of
F\"{o}llmer, Protter and Shiryaev (1995) \cite{FPS1995} and F\"{o}llmer \& Protter (2000) \cite{follmerprotter2000}.
Our construction separates the 
analytic mechanism behind the change-of-variable formula from the
probabilistic estimates needed to verify the required Sobolev trace
conditions for Brownian paths. These are objects of two separate theorems, which constitute our main results.

Given a path $\omega\in C([0,T], \Rr^d)$ with finite quadratic variation along a sequence of partitions $\pi=\{\pi_n\}$, we obtain an Ito-type change of variable formula for a subspace of the Sobolev space $W^{2, 1}(\Rr^d)$ associated with $\omega$. To define this space, we introduce a sequence of  measures $\mathcal{L}^{\pi_n}_{\omega}$ of $\omega$ which  are discrete approximations to the weighted occupation time measure $\mu_{\omega}$:
\[
\mu_{\omega}(A)={\rm tr}\left( \int_0^T 1_A(\omega(t)) d[\omega](t)\ \right).
\]
and define a space $\mathcal{W}^{2}_{\omega, \pi}\subset W^{2, 1}(\Rr^d)$ of functions with an integrability property with respect to the sequence of measures $\mathcal{L}^{\pi_n}_{\omega}$ (Definition \ref{def:W-omega-pi}).

Our first main result  (Theorem \ref{theorem:weakchangeofvar}) is a     change of variable formula  for functions in $\mathcal{W}^{2}_{\omega, \pi}$ computed along paths of finite quadratic variation.
Our second main result is to show that, when $\omega$ is a typical Brownian path we have
$$
W^{2+,p}(\mathbb{R}^d)\cap W^{2,1}(\mathbb{R}^d)
\subset W_{\omega,\pi}^2
$$
almost surely outside a polar set of starting points (Theorem \ref{lemma:Sobinc}).
Combined in Proposition \ref{prop.FP}, these results yield a pathwise derivation of the probabilistic covariation formulas of \cite{follmerprotter2000} and \cite{FPS1995}, but also disentangle the probabilistic ingredients in these results,  which rely on Brownian motion and its Gaussian properties, from the analytical ingredients which are valid more generally for all paths with finite quadratic variation.

\subsection{Related work}

Davis, Ob{\l}\'oj and Siorpaes \cite{davis2014} developed a
one-dimensional pathwise stochastic calculus based on discrete local
times. They characterize the weak convergence of these discrete local
times in $L^r$ and obtain pathwise It\^o--Tanaka formulae for
functions in the dual Sobolev class, as well as for differences of
convex functions under additional regularity assumptions on the local
time. Cont and Perkowski \cite{cp2018} and Cont and Jin \cite{contjin2024} extended this approach to the functional setting and to paths with finite p-th variation with arbitrary $p>1$, defining local time of order $p$ as a limit of discrete weighted local times.

Our approach is related but different. In dimension one, the
functions $L_{\omega}^{\pi_n}$ introduced in Section~4 are, up to a
factor of one half, the sum of the discrete local times of
Davis--Ob{\l}\'oj--Siorpaes for the path and its time reversal.
However, we do not assume that these functions converge to a pathwise
local time. Instead, they serve as majorizing densities used to define
the path-dependent seminorm $\|\cdot\|_{W^0_{\omega,\pi}}$. This
allows us to formulate a multidimensional change-of-variable formula
whose second-order term is the covariation
$$
[\nabla F(\omega),\omega]_{\pi}.
$$
Whenever both approaches apply in dimension one, this covariation
coincides with the corresponding local-time pairing:
$$
[f'(\omega),\omega]_{\pi}(t)
=
\int_{\mathbb{R}}L_t(u)f''(u)\,du.
$$  
Here we work directly with the sequence of discrete weighted occupation measures and do not require the discrete local times to converge.
\subsection{Outline}
We recall in Section \ref{sec:Sobspace}  some properties of Sobolev functions, Bessel potentials and corresponding polar sets. In Section \ref{sec:forC1functions}, we derive the change of variable formula with covariation term for $f\in C^2_b$. In Section \ref{sec:loctime},   we introduce the majorizing measures $\mathcal{L}^{\pi_n}_{\omega}$ and discusses some of their properties. 
In Section \ref{sec:forSoblv}, we define the spaces $\mathcal{W}^{2}_{\omega, \pi}$ which provide the setting for our main result.
Section \ref{sec:main} contains our main results: Theorem  \ref{theorem:weakchangeofvar} is the change of variable formula,
Theorem \ref{lemma:Sobinc} justifies the definition of the space $W_{\omega,\pi}^2$ by
showing that, when $\omega$ is a typical Brownian path, it contains every
$W^{2+s,p}(\mathbb{R}^d)\cap W^{2,1}(\mathbb{R}^d)$
 outside a polar set of starting points; Proposition \ref{prop.FP} recovers a version of the F\"ollmer-Protter formula \cite{follmerprotter2000}. 
 
 Appendix~\ref{sec:appA}
establishes a uniform $L^q$ bound for discrete
quadratic-variation densities of Brownian paths, used in the proof of Theorem~\ref{lemma:Sobinc}. Appendix \ref{sec:appB} proves some technical lemmas used in the proofs.
Appendix~\ref{sec.Proof54} gives the proof of Theorem~\ref{lemma:Sobinc}.

\section{Some properties of Sobolev spaces}\label{sec:Sobspace}

\subsection{Pointwise Inequalities for Sobolev Functions}

For $ k\in \mathbb {N} ,p\in [1,\infty ],$ denote by $W^{k,p}(\Rr^d)$  the Sobolev space of functions $f\in L^p(\Rr^d)$ such that for every multi-index $\alpha=(\alpha_1, \ldots, \alpha_d)\in \mathbf{N}^d$ with $|\alpha|:=\alpha_1\,+ \ldots\,+\, \alpha_d\leq k$, 
\[
D^{\alpha }f=\frac {\partial ^{|\alpha |}f} { \partial x_{1}^{\alpha _{1}} \dots \partial x_{d}^{\alpha _{d}} }\in L^p(\Rr^d).
\]
In what follows we denote $\nabla^m f$ the vector with components $D^{\alpha }f,\, |\alpha|= m$, and we identify $\nabla^2 f$  with the matrix
 $[\frac {\partial ^{2}f} { \partial x_{i} \partial x_{j}}]_{i, j=1}^d.$

We write
\[
W^{2+,p}(\mathbb{R}^d)
:=
\bigcup_{s\in(0,1)}W^{2+s,p}(\mathbb{R}^d).
\]
Equivalently, $f\in W^{2+,p}(\mathbb{R}^d)$ if and only if
$f\in W^{2+s,p}(\mathbb{R}^d)$ for some $s\in(0,1)$.

%
%
%

We recall some results on the choice of  natural representatives of a function for $f\in W^{k, p}$ and its derivatives, and the properties of such natural representatives. As  in \cite{BH},  we define these natural representatives as
 \eq{\label{2eq:values}
 D^{\alpha}f(x)=\lim\sup_{r \to 0}\fint_{B(x, r)}  D^{\alpha}f(y)\d y,\quad |\alpha|\leq k.
 }

Let $Q\subset \Rr^d$ be either a cube or a ball. For $f\in L^1_{loc}(Q)$ we denote
\[
f_{Q}:=\frac 1 {|Q|}\int_Q f dx.
\]
The following special case of Theorem 1 in \cite{BH} is useful:
\begin{theorem}\label{theorem:Sobinq}
There exists a constant $C_d$ such that if $f\in W^{1, 1}(\Rr^d)$ and its derivatives are defined at every point by \ref{2eq:values}, and $x, y\in\Rr^d$ be such that $|f(y)|, |f(x)| <+\infty.$ If $x, y\in Q$ then
\[
|f(y)- f(x) |\leq C_d\left(\int_Q \frac {|\nabla f(z)|} {|x- z|^{d-1}} dz\,+\, \int_Q \frac {|\nabla f(z)|} {|y- z|^{d-1}} dz\right).
\]
Similarly for $f\in W^{2, 1}(\Rr^d)$, and $x, y\in Q$ with $|f(y)|, |f(x)|, |\nabla f(x)| <+\infty$ 
\[
|f(y)- f(x) - \nabla f(x) (y-x)|\leq C_d\left(\int_Q \frac {|\nabla^2 f(z)|} {|x- z|^{d-2}} dz\,+\, \int_Q \frac {|\nabla^2 f(z)|} {|y- z|^{d-2}} dz\right).
\]
\end{theorem}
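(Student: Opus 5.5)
The plan is to prove the first-order estimate first and then derive the second-order one from it. Both rest on the classical representation of a function through averages over segments. I would begin with $f\in C^\infty\cap W^{1,1}$ and pass to general $f$ by mollification at the end.

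\emph{First-order estimate.} For $x,z\in Q$ we have
\[
f(x)-f(z)=-\int_0^1 \nabla f(x+t(z-x))\cdot(z-x)\,dt .
\]
Averaging this identity over $z\in Q$ and using convexity of $Q$, the standard polar-coordinates substitution (Gilbarg--Trudinger, Lemma 7.16) gives
\[
|f(x)-f_Q|\le \frac{(\operatorname{diam}Q)^d}{d|Q|}\int_Q\frac{|\nabla f(z)|}{|x-z|^{d-1}}dz .
\]
For a cube or a ball the ratio $(\operatorname{diam}Q)^d/|Q|$ is a dimensional constant. Applying the same bound at $y$ and using the triangle inequality through $f_Q$ yields the first inequality for smooth $f$.

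\emph{Second-order estimate.} Let $g(z):=f(z)-f(x)-\nabla f(x)\cdot(z-x)$. Then $g(x)=0$ and the quantity to bound is $|g(y)|$. A Taylor formula with integral remainder along segments, averaged over $Q$, gives
\[
|g(y)-g_Q-\nabla g_Q\cdot(y-\bar z)|\lesssim \int_Q \frac{|\nabla^2 f(z)|}{|y-z|^{d-2}}dz
\]
for a suitable centre $\bar z$ (the Sobolev representation of order two). The same bound holds at $x$. Subtracting the two, the polynomial parts cancel except for the term $\nabla g_Q\cdot(y-x)$. Since $\nabla g=\nabla f-\nabla f(x)$, the first-order estimate applied to $\nabla f$ gives
\[
|\nabla g_Q|\lesssim \int_Q |\nabla^2 f(z)|\,|x-z|^{1-d}\,dz .
\]
Multiplying by $|y-x|\le\operatorname{diam}Q$ does not directly yield $|x-z|^{2-d}$, so this step needs care. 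A cleaner route is the one in \cite{BH}: use the averaged Taylor polynomial of degree one directly. The remainder $f(x)-P_Qf(x)$ is bounded by the degree-two Riesz potential $\int_Q|\nabla^2 f|\,|x-z|^{2-d}$, and likewise for $\nabla f(x)-\nabla P_Qf$ relative to $\operatorname{diam} Q$. I would then check that the linear polynomial terms combine exactly, since $P_Qf$ is affine. In any case this second-order combination is a special case of Theorem~1 in \cite{BH}, and I would simply cite it.

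\emph{Passage to general $f$.} For general $f$, take mollifications $f_\epsilon\to f$. At points where $|f(x)|<\infty$, the representatives defined by \eqref{2eq:values} are limits of averages. One has to pass to the limit in the potential terms using Fatou's lemma and the $L^1$ convergence of $\nabla f_\epsilon$ convolved with the locally integrable kernel $|x-\cdot|^{1-d}$. This is where the $\limsup$ convention matters: at Lebesgue points the mollified values converge, and at points where the right-hand side is finite one shows, by applying the smooth estimate on small balls, that $x$ is in fact a Lebesgue point.

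I expect this last step to be the main obstacle: showing that the inequality holds at every point where the quantities involved are finite, not merely almost everywhere.
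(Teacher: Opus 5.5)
The obstruction you found in the second-order step is genuine, not an artifact of your method. With the pure kernel $|x-z|^{2-d}$, the second inequality is false for every $d\ge 2$, already for $f\in C_c^\infty(\mathbb{R}^d)$. So it cannot be obtained from your sketch, and citing \cite{BH} does not supply it either; the paper gives no argument beyond that citation. Here is a counterexample. Fix $\phi\in C_c^\infty(B(0,1))$ with $\phi(0)=1$ and $0<\varepsilon\le 1/2$. Put $f(z)=(z_1-x_1)\,\phi\bigl((z-x)/\varepsilon\bigr)$ and $y=x+e_1$. Then $f(x)=f(y)=0$ and $\nabla f(x)=e_1$, so the left-hand side equals $1$. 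On the other hand, $|\nabla^2 f|\le C_\phi\,\varepsilon^{-1}\mathbf{1}_{B(x,\varepsilon)}$ and $|y-z|\ge 1/2$ on $B(x,\varepsilon)$. Hence for every ball or cube $Q$ containing $x$ and $y$,
\[
\int_Q\frac{|\nabla^2 f(z)|}{|x-z|^{d-2}}\,dz\le C\varepsilon,
\qquad
\int_Q\frac{|\nabla^2 f(z)|}{|y-z|^{d-2}}\,dz\le C\varepsilon^{d-1},
\]
and no dimensional constant $C_d$ can work.

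Your ``cleaner route'' leaves exactly the same term. Since $P=P_Qf$ is affine,
\[
f(y)-f(x)-\nabla f(x)\cdot(y-x)=(f-P)(y)-(f-P)(x)-\bigl(\nabla f(x)-\nabla P\bigr)\cdot(y-x).
\]
The last term is controlled only by $|y-x|\int_Q|\nabla^2 f(z)|\,|x-z|^{1-d}\,dz$, which is your flagged cross term, and in the example above it is of order one.

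The correct estimate is the one with right-hand side $C_d\,|x-y|\bigl(\int_Q|\nabla^2 f|\,|x-z|^{1-d}dz+\int_Q|\nabla^2 f|\,|y-z|^{1-d}dz\bigr)$, for $Q$ with $\operatorname{diam}Q\le C|x-y|$. This is what actually underlies the maximal-function form in Theorem~\ref{theorem:Sobineq}, which the example does not contradict since $M|\nabla^2 f|(x)\gtrsim\varepsilon^{-1}$. Your decomposition proves exactly this bound once you use $|y-z|^{2-d}\le\operatorname{diam}(Q)\,|y-z|^{1-d}$ on the first two terms.

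The same example also breaks the per-interval bound invoked in the proof of Theorem~\ref{theorem:weakchangeofvar}. There, however, only $H\in C^2_b$ is involved, and the covariation estimate follows directly from Proposition~\ref{proposition:C1covaritatin} and Lemma~\ref{lemma:localtime}.

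Your first-order argument is sound at interior points of $Q$, where finiteness of the potential does force $x$ to be a Lebesgue point. At boundary points of a closed $Q$ it fails for the $\limsup$ representative. For instance, in $d=2$ take $f=\psi(\theta)\chi(|z|)$ in polar coordinates. Let $\chi$ be a compactly supported cutoff equal to $1$ near $0$, and let $\psi$ be smooth and $2\pi$-periodic with $\psi=0$ on $[0,\pi/2]$ but $\int_0^{2\pi}\psi\neq0$. Then $f\in W^{1,1}(\mathbb{R}^2)$ and $\nabla f=0$ on $Q=[0,1]^2$. Yet $f(0)\neq 0=f(y)$ for $y$ in the interior of $Q$.
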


Recall the definition of Hardy-Littlewood maximal functions
\[
M_Rf (x):=\sup _{r<R} \fint_{B(x, r)} |f(y)| \d y,\quad Mf(x):= M_{\infty}f(x)=\sup _{r>0} \fint_{B(x, r)} |f(y)| \d y.
\]

\begin{lemma}\label{lemma:hedberg}
For $\alpha>0$ there exists a constant $C_{\alpha, d}>0$ such that for every $f\in L^1(Q)$ and all $x\in Q$
\[
\int_Q \frac {|f(y)|} {|x- y|^{d-\alpha}} \d y \leq C_{\alpha, d} ({\rm diam}\, Q)^{\alpha}M_{{\rm diam}\, Q}f(x).
\]
\end{lemma}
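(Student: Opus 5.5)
The plan is the standard Hedberg dyadic-shell decomposition. Fix $x\in Q$ and set $\delta:={\rm diam}\, Q$, so that $Q\subset B(x,\delta)$. It is enough to bound
\[
\int_{B(x,\delta)} \frac{|f(y)|\mathbf 1_Q(y)}{|x-y|^{d-\alpha}}\,\d y .
\]
We read $M_{\delta}f$ as the maximal function of $f\mathbf 1_Q$, that is, $f$ extended by zero outside $Q$. Indeed, $f$ is only given on $Q$, and enlarging the integrand can only help.

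First, split $B(x,\delta)$ into the annuli
\[
A_k:=\{y: 2^{-k-1}\delta\le |x-y|<2^{-k}\delta\},\qquad k\ge 0 .
\]
These are disjoint and cover $B(x,\delta)\setminus\{x\}$, and the point $\{x\}$ has measure zero. On $A_k$ we have $|x-y|^{-(d-\alpha)}\le (2^{-k-1}\delta)^{-(d-\alpha)}$, because $d-\alpha$ may be taken positive. If $\alpha\ge d$, the kernel is bounded by $\delta^{\alpha-d}$ on $Q$, and the claim follows directly from $\int_Q|f|\le |B(x,\delta)|\fint_{B(x,\delta)}|f\mathbf 1_Q|$.

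Second, bound each shell by an average over a ball:
\[
\int_{A_k}|f\mathbf 1_Q|\le \int_{B(x,2^{-k}\delta)}|f\mathbf 1_Q|= \omega_d (2^{-k}\delta)^d \fint_{B(x,2^{-k}\delta)}|f\mathbf 1_Q| \le \omega_d(2^{-k}\delta)^d\, M_{\delta}f(x).
\]
For $k=0$ the radius equals $\delta$. Here we use either the convention $r\le R$ in the supremum, or the fact that $\fint_{B(x,r)}$ is continuous in $r$, so a sup over $r<\delta$ already controls $r=\delta$.

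Third, sum over $k$:
\[
\int_Q\frac{|f(y)|}{|x-y|^{d-\alpha}}\,\d y\le \sum_{k\ge0} 2^{d-\alpha}(2^{-k}\delta)^{\alpha-d}\,\omega_d (2^{-k}\delta)^d M_\delta f(x) = 2^{d-\alpha}\omega_d\,\delta^{\alpha}\sum_{k\ge0}2^{-k\alpha}\, M_\delta f(x).
\]
The geometric series converges since $\alpha>0$. This gives $C_{\alpha,d}=2^{d-\alpha}\omega_d/(1-2^{-\alpha})$.

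No step is a genuine obstacle. The only points needing care are the boundary radius $r=\delta$ in the definition of $M_\delta$, which is handled by continuity in $r$, and the interpretation of $f\in L^1(Q)$ as extended by zero.
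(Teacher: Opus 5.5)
The paper does not prove this lemma; it is quoted as the classical Hedberg estimate and used only to pass from Theorem~\ref{theorem:Sobinq} to Theorem~\ref{theorem:Sobineq}. Your dyadic-annulus argument is the standard proof, and it is correct, including the two points you flag. The endpoint radius $r=\delta$, which the paper's convention $r<R$ excludes, is controlled by left-continuity of $r\mapsto\int_{B(x,r)}|f\mathbf{1}_Q|$. Reading $M_\delta$ as acting on $f\mathbf{1}_Q$ is harmless, since $M_\delta(f\mathbf{1}_Q)\le M_\delta f$ whenever $f$ is globally defined, as it is in the paper's application to $\nabla f$ and $\nabla^2 f$.

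Two cosmetic points. $Q$ lies in the closed ball $\overline{B}(x,\delta)$ rather than the open one, but the difference is a null set. The separate case $\alpha\ge d$ could be folded into the same geometric sum, using $|x-y|^{\alpha-d}\le(2^{-k}\delta)^{\alpha-d}$ on $A_k$.
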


As a consequence of Theorem \ref{theorem:Sobinq} and  Lemma \ref{lemma:hedberg}, we have the following:
\begin{theorem}\label{theorem:Sobineq}
There exists a constant $C_d$ such that if $f\in W^{1, 1}(\Rr^d)$ is defined at every point by \ref{2eq:values}, and $x, y\in\Rr^d$ be such that $|f(x)|, |f(y)|<+\infty.$ Then
\[
|f(y)-f(x)|
\leq
C_d\left(
M_{|x-y|}(|\nabla f|)(x)
+
M_{|x-y|}(|\nabla f|)(y)
\right)|x-y|,
\]
There exists a constant $C_d$ such that if $f\in W^{2, 1}(\Rr^d)$ and its derivatives are defined at every point by \ref{2eq:values}, and $x, y\in\Rr^d$  such that $|f(y)|, |f(x)|, |\nabla f(x)| <+\infty,$ 
\[
|f(y)-f(x)-\nabla f(x)(y-x)|
\leq
C_d\left(
M_{|x-y|}(|\nabla^2 f|)(x)
+
M_{|x-y|}(|\nabla^2 f|)(y)
\right)|x-y|^2.
\]
\end{theorem}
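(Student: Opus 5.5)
The plan is to apply Theorem \ref{theorem:Sobinq} with a well-chosen set $Q$ containing both $x$ and $y$, and then bound each Riesz-type integral by Lemma \ref{lemma:hedberg}. Set $r:=|x-y|$; we may assume $r>0$, since otherwise both sides vanish. Take $Q$ to be a ball of diameter comparable to $r$ containing $x$ and $y$, for instance $Q=B\bigl(\tfrac{x+y}{2},\tfrac{r}{2}\bigr)$, closed or slightly enlarged so that it contains both endpoints. Then $\mathrm{diam}\,Q=r$ (or $(1+\epsi)r$). The first-order inequality of Theorem \ref{theorem:Sobinq} gives
\[
|f(y)-f(x)|\le C_d\Bigl(\int_Q\frac{|\nabla f(z)|}{|x-z|^{d-1}}\,dz+\int_Q\frac{|\nabla f(z)|}{|y-z|^{d-1}}\,dz\Bigr).
\]
Lemma \ref{lemma:hedberg} with $\alpha=1$, applied to $|\nabla f|\,1_Q\in L^1(Q)$ at the points $x$ and $y$, bounds each integral by $C_{1,d}\, r\, M_{r}(|\nabla f|)(x)$ and $C_{1,d}\, r\, M_{r}(|\nabla f|)(y)$ respectively. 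This yields the first estimate.

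The second estimate is proved the same way. Use the second-order inequality of Theorem \ref{theorem:Sobinq} with the same $Q$, and then apply Lemma \ref{lemma:hedberg} with $\alpha=2$ to $|\nabla^2 f|$. This produces the factor $(\mathrm{diam}\,Q)^2=r^2$ multiplied by the truncated maximal functions at $x$ and $y$.

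Two small technical points need checking.

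\textbf{Truncation radius of the maximal function.} Lemma \ref{lemma:hedberg} produces $M_{\mathrm{diam}\,Q}$, and the restriction of the integrand to $Q$ only makes the averages smaller. If we are forced to take $\mathrm{diam}\,Q$ slightly larger than $r$, say $2r$ for a cube, we handle it as follows. Either we note that $M_{2r}g(x)\le 2^d M_r g(x)$ fails in general, in which case we instead choose $Q$ so that $\mathrm{diam}\,Q=r$ exactly (the ball with diameter the segment $[x,y]$). Or we observe that the proof of Lemma \ref{lemma:hedberg}, a dyadic annuli decomposition around $x$, only uses averages over balls $B(x,\rho)$ with $\rho\le \mathrm{diam}\,Q$.

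\textbf{Closed versus open ball.} The endpoints $x,y$ must lie in $Q$. We therefore use the closed ball, which differs from the open ball by a null set and so does not affect the integrals.

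The main obstacle is this bookkeeping of $\mathrm{diam}\,Q$ versus $|x-y|$ in the maximal function radius. If needed, we absorb the discrepancy by redefining the constant and using the monotonicity $M_R\le M_{R'}$ for $R\le R'$. This works only in the favorable direction, which is why we pick $Q$ with $\mathrm{diam}\,Q=|x-y|$ exactly.
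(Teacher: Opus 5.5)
Your proposal is correct and follows the paper's route: the paper obtains this statement directly as a consequence of Theorem~\ref{theorem:Sobinq} and Lemma~\ref{lemma:hedberg}, exactly as you do. Your choice of $Q$ as the closed ball with the segment $[x,y]$ as a diameter makes $\mathrm{diam}\,Q=|x-y|$, so the truncation radius of the maximal function comes out right; this is the bookkeeping the paper's one-line deduction leaves implicit.
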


The proof of the estimates in Theorem \ref{theorem:Sobineq} and their higher dimensional analogs can also be found in \cite{Boj06}, where the author proves in addition that these type of inequalities completely characterize Sobolev spaces.

\subsection{Bessel Characterization of Sobolev Spaces}
\label{sec:Bessel}

We present here briefly an interesting characterization of Sobolev spaces through `Bessel potentials'.
For detailed discussions of Bessel potentials and their properties we refer to \cite{stein1970}, \cite{fukushima1993}.

\begin{definition}[Bessel Kernel]
The Bessel kernel $g_{\alpha}\colon \Rr^d\to \Rr,\, \alpha>0$ is defined by its Fourier transform
\[
\hat g_{\alpha}(\xi):= \frac 1 { (1+4\pi ^{2}\vert \xi \vert ^{2})^{\alpha/2}}.
\]
where the Fourier transform is defined as
\[
\hat {f}(\xi )={\mathcal {F}}(f)(\xi)=\int _{\mathbb {R} ^{d}}f(x )e^{-2\pi i\, x\cdot \xi }\,\d x.
\]
Equivalently, one has
\[
 (I-\Delta )^{-\alpha/2}f=g_{\alpha}\ast f,
\]
where $\Delta:= \sum_{i=1}^{n}{\frac {\partial ^{2}}{\partial x_{i}^{2}}}$ is the Laplace operator. 

\end{definition}

For $0<\alpha \leq d$ Bessel kernel $g_{\alpha}$ has the following behavior at the origin:
\eq{\label{2eq:Bessassymp}
g_{\alpha}(x) =\frac 1 {\gamma(\alpha) |x|^{d-\alpha}} + o\left(\frac 1 { |x|^{d-\alpha}} \right), \quad 0<\alpha <d \nonumber\\
\\
g_{d}(x)={\frac {1}{2^{d-1}\pi ^{d/2}}}\ln {\frac {1}{\vert x\vert }}(1+o(1)),\,  \text{ as } |x|\to 0.\nonumber
}

%
%
%
%
%

To define the Bessel potential spaces, note that for any function $f\in L^p(\Rr^d),$ where $1\leq p \leq \infty$ the convolution $g_{\alpha}\star f$ is well defined as a function on $L^p(\Rr^d)$, since
\[
\|g_{\alpha}\star f\|_{L^p(\Rr^d)} \leq \|g_{\alpha}\|_{L^1(\Rr^d)} \|f\|_{L^p(\Rr^d)} .
\]

\begin{definition}[Bessel Space]
For $p\in [1, \infty]$ and $\alpha> 0$ the Bessel space $\mathcal{L}^p_{\alpha}(\Rr^d)$ is defined by
\[
\mathcal{L}^p_{\alpha}(\Rr^d):=\{\, g_{\alpha}\star f\colon\, f\in L^p(\Rr^d) \,\},
\]
where $g_0:= \delta_0$, so that $g_{0}\star f =f$.  The space $\mathcal{L}^p_{\alpha}(\Rr^d)$ is endowed with the norm
\[
\|F\|_{\mathcal{L}^p_{\alpha}(\Rr^d)} :=\|f\|_{L^p(\Rr^d)}
\]
where $f\in L^p(\Rr^d)$ such that $F=g_{\alpha}\star f$.
\end{definition}

\begin{theorem}[\cite{stein1970}]\label{theorem:Bessel}
Let $k$ be a positive integer and $1<p<+\infty$, then 
\[
\mathcal{L}^p_{k}(\Rr^d) = W^{k, p}(\Rr^d).
\]
In the sense that $F\in \mathcal{L}^p_{k}(\Rr^d)$ if and only if $F\in W^{k, p}(\Rr^d)$ and the norms $\|\cdot \|_{\mathcal{L}^p_{k}(\Rr^d)}$ and $\|\cdot\|_{W^{k, p}(\Rr^d)}$ are equivalent.
\end{theorem}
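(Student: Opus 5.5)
The statement is Theorem \ref{theorem:Bessel}: for integer $k\ge 1$ and $1<p<\infty$ we have $\mathcal{L}^p_k(\Rr^d)=W^{k,p}(\Rr^d)$ with equivalent norms. The plan is to reduce both inclusions to $L^p$-boundedness of Fourier multipliers, using the Mikhlin--H\"ormander multiplier theorem and the boundedness of Riesz transforms, both valid because $1<p<\infty$. I would first work on the Schwartz class $\mathcal{S}(\Rr^d)$ and then pass to the full spaces by density. This uses two facts: $g_\alpha\in L^1$ with $\|g_\alpha\|_{L^1}=\hat g_\alpha(0)=1$, so $\mathcal{L}^p_k$ is continuously embedded in $L^p$; and $\mathcal{S}$ is dense in both $W^{k,p}$ and $L^p$.

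\textbf{Inclusion $\mathcal{L}^p_k\subset W^{k,p}$.} Let $F=g_k\star f$ with $f\in L^p$. For $|\alpha|\le k$, the derivative $D^\alpha F$ is, at the level of Fourier transforms, the multiplier
\[
m_\alpha(\xi)=\frac{(2\pi i\xi)^\alpha}{(1+4\pi^2|\xi|^2)^{k/2}}
\]
applied to $f$. This symbol is smooth and homogeneous of degree $\le 0$ at infinity. One checks $|\partial^\beta m_\alpha(\xi)|\le C_\beta|\xi|^{-|\beta|}$ for all $\beta$, for instance by writing $m_\alpha$ as a product of smooth symbols of order $\le 0$. Mikhlin's theorem then gives $\|D^\alpha F\|_{L^p}\le C\|f\|_{L^p}$. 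By density this identifies the weak derivatives of $F$, so $\|F\|_{W^{k,p}}\le C\|F\|_{\mathcal{L}^p_k}$.

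\textbf{Inclusion $W^{k,p}\subset\mathcal{L}^p_k$.} Given $F\in W^{k,p}$, I need $f\in L^p$ with $\hat f=(1+4\pi^2|\xi|^2)^{k/2}\hat F$. This is the main obstacle, since $(1+4\pi^2|\xi|^2)^{k/2}$ is not a polynomial when $k$ is odd.

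My first attempt is to choose a multiplier decomposition
\[
(1+4\pi^2|\xi|^2)^{k/2}=\sum_{|\alpha|\le k} m^\alpha(\xi)\,(2\pi i\xi)^\alpha,
\]
with each $m^\alpha$ a Mikhlin multiplier. This would give $f=\sum_\alpha T_{m^\alpha}D^\alpha F$, hence $\|f\|_p\le C\sum_{|\alpha|\le k}\|D^\alpha F\|_p$.

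To build the decomposition, I would use Stein's trick. Take a smooth cutoff $\chi$ with $\chi=1$ near $0$ and write
\[
(1+4\pi^2|\xi|^2)^{k/2}=\frac{(1+4\pi^2|\xi|^2)^{k/2}}{1+\sum_j|2\pi\xi_j|^k}\Big(1+\sum_j|2\pi\xi_j|^k\Big).
\]
The first factor is a Mikhlin multiplier, because its denominator is comparable to $(1+|\xi|^2)^{k/2}$. For the second factor, write $|2\pi\xi_j|^k=\big(\tfrac{|\xi_j|^k}{\xi_j^k}\big)(2\pi\xi_j)^k$. This reduces the problem to showing that $\operatorname{sgn}(\xi_j)^k$ combined with the first factor is an $L^p$ multiplier.

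Here $\operatorname{sgn}(\xi_j)$ is exactly the directional Hilbert transform, which is $L^p$-bounded for $1<p<\infty$. A composition of bounded operators remains bounded, so the decomposition goes through, and $(2\pi i\xi_j)^k\hat F$ is the transform of $\partial_j^kF\in L^p$.

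Together the two inclusions show that the map $F\mapsto f=(I-\Delta)^{k/2}F$ is a bijection between the two spaces with the two norms equivalent. Finally, I would check that this $f$, defined first on $\mathcal{S}$, extends continuously, and that the identity $F=g_k\star f$ persists in the limit by the $L^1$-boundedness of $g_k$.
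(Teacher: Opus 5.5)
The paper gives no proof of this result; it simply cites Stein. Your architecture (Fourier multipliers, density of $\mathcal{S}$, and for the reverse inclusion the trick with $1+\sum_j|2\pi\xi_j|^k$ and directional Hilbert transforms) is essentially Stein's. Your first inclusion and the density and uniqueness bookkeeping are fine.

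The gap is the assertion that
\[
m(\xi)=\frac{(1+4\pi^2|\xi|^2)^{k/2}}{1+\sum_j|2\pi\xi_j|^k}
\]
is a Mikhlin multiplier ``because its denominator is comparable to $(1+|\xi|^2)^{k/2}$''. Comparability only gives $m\in L^\infty$. Mikhlin's theorem needs $m\in C^{\lfloor d/2\rfloor+1}(\mathbb{R}^d\setminus\{0\})$ with $|\partial^\beta m(\xi)|\le C|\xi|^{-|\beta|}$. For odd $k$ the function $|\xi_j|^k$ is not $C^k$ across the hyperplane $\{\xi_j=0\}$, and for $d\ge 2$ this hyperplane is not contained in $\{0\}$. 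Already for $k=1$, $d\ge2$, $\partial_{\xi_1}m$ jumps across $\{\xi_1=0\}$, so $m\notin C^1(\mathbb{R}^d\setminus\{0\})$. Even H\"ormander's $L^2$-Sobolev form of the condition fails, since $\partial_{\xi_1}^2 m$ contains a measure supported on that hyperplane. Odd $k$ is precisely the case where you need the trick: for even $k$, $\operatorname{sgn}(\xi_j)^k=1$ and $m$ is smooth. So as written, the key step of the reverse inclusion is unjustified.

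There are two ways to repair it. The first is the route Stein takes, via the Marcinkiewicz multiplier theorem. Here $m$ is smooth in each open orthant and $|\xi^\beta\partial^\beta m(\xi)|\le C$ for every $\beta\in\{0,1\}^d$. This holds because in a fixed orthant the denominator is a sum of one-variable functions with $\xi_j\partial_{\xi_j}|\xi_j|^k=k|\xi_j|^k$. Stein does this one derivative at a time and then inducts on $k$.

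The simpler repair is to notice that the obstacle you flagged is not real: your ``first attempt'' decomposition exists with smooth symbols. Expanding $(1+4\pi^2|\xi|^2)^k=\sum_{|\gamma|\le k}c_\gamma(2\pi\xi)^{2\gamma}$ by the multinomial formula gives
\[
(1+4\pi^2|\xi|^2)^{k/2}=\sum_{|\gamma|\le k}c_\gamma\,\frac{(2\pi\xi)^\gamma}{(1+4\pi^2|\xi|^2)^{k/2}}\,(2\pi\xi)^\gamma .
\]
Each $(2\pi\xi)^\gamma(1+4\pi^2|\xi|^2)^{-k/2}$ with $|\gamma|\le k$ is a smooth symbol of order $\le 0$, exactly like your $m_\alpha$ in the first inclusion, hence a genuine Mikhlin multiplier. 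Meanwhile $(2\pi\xi)^\gamma\hat F=(-i)^{|\gamma|}\widehat{D^\gamma F}$ with $D^\gamma F\in L^p$. This yields $\|f\|_{L^p}\le C\sum_{|\gamma|\le k}\|D^\gamma F\|_{L^p}$ with no Hilbert transforms at all. The cutoff $\chi$ you introduce is never used and can be dropped.
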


\begin{definition}[Bessel capacity]\label{def:besscap}
For $p\in [1, \infty]$ and $\alpha>0$  the Bessel $(\alpha, p)$-capacity of a set $E\subset \Rr^d$ is defined as
\[
B_{\alpha, p}(E):=\inf\{\|h\|_{L^p}^p\colon\, g_{\alpha}\star h\geq  1_E,\, h\geq 0 \}.
\]
We call a set $E$ $(\alpha, p)$-polar if $B_{\alpha, p}(E)=0.$
\end{definition}
It is known that the Bessel potential is countably subadditive (see \cite{fukushima1993}), thus any union of countable number of  $(\alpha, p)$-polar sets is also $(\alpha, p)$-polar.

Hereafter, we denote $\Omega:=C([0, T], \Rr^d)$, let $B$ be the canonical coordinate process on $\Omega$ and $\mathbb{P}_x$ be the law of Brownian motion starting at $x$. A probabilistic definition of a polar set, associated with Brownian motion, is the following \cite{fukushima1993}:  
\begin{definition}[Polar set]\label{chp2def:polar}
A set $E\subset \Rr^d$ is called polar if
\[
\mathbb{P}_x\left( B(t)\in E \text{ for some } t>0\right)=0,\quad \forall x\in \Rr^d.
\]
\end{definition}

It is known that $E$ is polar if and only if $E$ is $(1, 2)$-polar \cite[p. 25]{fukushima1993}.

Finally we recall the following important theorem \cite[Corollary 4]{BH}:
\begin{theorem}[\cite{BH}]\label{theorem:Lebesgue}
Let $k\in\mathbb N$ and $1<p<\infty$. The exceptional set of Lebesgue points of any $F\in W^{k,p}(\Rr^d)$ is $(k, p)$-polar.
\end{theorem}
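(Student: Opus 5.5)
The plan is to reduce the statement to the Bessel representation of Theorem \ref{theorem:Bessel} together with a capacitary weak-type estimate for maximal functions of Bessel potentials. By Theorem \ref{theorem:Bessel}, write $F=g_k\star f$ with $f\in L^p(\Rr^d)$, $\|f\|_{L^p}\simeq\|F\|_{W^{k,p}}$. The exceptional set is the set $E$ of points $x$ where the limit $\lim_{r\to0}\fint_{B(x,r)}F$ fails to exist, or exists but $\fint_{B(x,r)}|F-F(x)|\not\to0$. I will show $B_{k,p}(E)=0$.

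\textbf{Step 1 (capacitary maximal inequality).} For $h\ge0$ in $L^p$, the averages satisfy
\[
\fint_{B(x,r)} g_k\star h \le C\, g_k\star h(x)
\]
whenever $g_k\star h(x)<\infty$, up to a harmless modification of the kernel. This holds because $g_k$ is radially decreasing, so its ball averages are pointwise dominated by $C g_k$ (e.g. by replacing $g_k$ with the comparable kernel $\tilde g_k(z)=\sup_{|w-z|\le |z|/2}g_k(w)$, which is $\lesssim g_k$ by the asymptotics \eqref{2eq:Bessassymp} and the exponential decay at infinity). Hence
\[
\sup_{r} \fint_{B(x,r)} |g_k\star h|\le C\, g_k\star|h|(x),
\]
and by the definition of capacity (test function $|h|/\lambda$),
\[
B_{k,p}\Bigl(\{x:\ \sup_r\fint_{B(x,r)}|g_k\star h|>\lambda\}\Bigr)\le C\lambda^{-p}\|h\|_{L^p}^p .
\]

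\textbf{Step 2 (density argument).} Pick $f_j\in C_c^\infty$ with $\|f-f_j\|_{L^p}\le 4^{-j}$ and set $F_j=g_k\star f_j$, which is continuous. For continuous $F_j$ every point is a Lebesgue point. Define
\[
G_j=\Bigl\{x:\ \sup_r\fint_{B(x,r)}|F-F_j|>2^{-j}\Bigr\}\cup\{x:\ g_k\star|f-f_j|(x)>2^{-j}\}.
\]
By Step 1, $B_{k,p}(G_j)\le C2^{jp}4^{-jp}$, which is summable. Off $\limsup_j G_j$, the functions $F_j$ converge uniformly-in-$r$ in the averaged sense. A standard $3\varepsilon$ argument then shows that $x$ is a Lebesgue point of $F$, with value $\lim_j F_j(x)$, consistent with the representative \eqref{2eq:values}.

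\textbf{Step 3 (conclusion).} Monotonicity and countable subadditivity of $B_{k,p}$ give $B_{k,p}(\limsup_j G_j)\le\sum_{j\ge m}B_{k,p}(G_j)\to0$ as $m\to\infty$. Hence $E\subset\limsup_j G_j$ is $(k,p)$-polar.

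The main obstacle is Step 1, namely the pointwise domination of ball averages of $g_k\star|h|$ by $g_k\star|h|(x)$. Near the origin this needs the monotone asymptotics of $g_k$. The delicate case is $k\ge d$, where the logarithmic or bounded behaviour of $g_k$ must be handled separately; there every point may be a Lebesgue point via continuity, using Sobolev embedding when $kp>d$. For $kp\le d$ with $k<d$ I would use the Riesz-type bound from \eqref{2eq:Bessassymp} together with the integral form of Lemma \ref{lemma:hedberg}.
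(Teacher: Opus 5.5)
The paper gives no proof of this statement. It quotes it from \cite{BH} (Corollary~4) and uses it as a black box in Appendix~\ref{sec.Proof54}. Your argument is the classical self-contained capacitary proof: the Bessel representation from Theorem~\ref{theorem:Bessel}, a capacitary weak-type bound for ball averages of $g_k\star|h|$, approximation by smooth potentials, and a Borel--Cantelli argument using countable subadditivity of $B_{k,p}$. Steps~2 and~3 are correct. For $x\notin\limsup_j G_j$ one has $|F_j(x)-F_{j+1}(x)|\le g_k\star|f-f_j|(x)+g_k\star|f-f_{j+1}|(x)$, so $F_j(x)$ converges, to $g_k\star f(x)$. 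The $3\varepsilon$ argument then makes $x$ a Lebesgue point whose value agrees with \eqref{2eq:values}.

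One claim in Step~1 is false as written and must be localized. Because $g_k$ decays exponentially, $\tilde g_k(z)=g_k(z/2)$ is not $\lesssim g_k(z)$ for large $|z|$; the ratio grows like $e^{|z|/2}$. The bound $\sup_{r>0}\fint_{B(x,r)}g_k\star|h|\le C\,g_k\star|h|(x)$ genuinely fails. Take $h=1_{B(z,1)}$ with $|x-z|=r/2$ and $r$ large: the left side is of order $r^{-d}$, while the right side is of order $e^{-r/2}$ up to polynomial factors.

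Since Lebesgue points only involve $r\to0$, replace $\sup_r$ by $\sup_{0<r\le1}$ throughout, including in the definition of $G_j$. The local bound then holds for every $k$:
\begin{itemize}
\item If $|x-z|\ge 2r$, then $|y-z|\ge\max(|x-z|/2,\,|x-z|-1)$ for $y\in B(x,r)$. Moreover $g_k(\max(s/2,s-1))\le C\,g_k(s)$ for all $s>0$, by doubling near the origin and the exponential asymptotics at infinity.
\item If $|x-z|<2r\le 2$, then $\fint_{B(x,r)}g_k(y-z)\,dy\le Cr^{-d}\int_{B(0,3r)}g_k\le C\,g_k(x-z)$. This uses the local behaviour of $g_k$: \eqref{2eq:Bessassymp} for $k\le d$, and boundedness and positivity for $k>d$.
\end{itemize}
With this change, the capacity estimate and Steps~2--3 go through verbatim.

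Your closing case analysis is unnecessary, and the case you call delicate is in fact trivial. Since $p>1$, $k\ge d$ forces $kp>d$. Then $g_k\in L^{p'}$, so $F=g_k\star f$ is continuous and the exceptional set is empty. The localized Step~1 bound also covers all $k$ at once, so Lemma~\ref{lemma:hedberg} is not needed.

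Compared with the paper's bare citation, your route makes the statement depend only on Theorem~\ref{theorem:Bessel} and Definition~\ref{def:besscap}. It also identifies the Lebesgue value as $g_k\star f(x)$. The citation buys only brevity.
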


\section{Change of variable formula with covariation term for smooth functions} \label{sec:forC1functions}

We first prove a pathwise change of variable formula with covariation term for twice differentiable functions. 
The setting is the same as in F\"ollmer (1981) \cite{follmer1981}.

Let us recall the definition of pathwise quadratic variation along a sequence of time partitions \cite{follmer1981,ananova2017} in the continuous case. Consider a sequence $\pi_n = \{0 = t_0^n < t_1^n < ... < t^n_{m(n)} = T\}$ of partitions of $[0, T]$. A continuous path $x \in C^0([0, T], \mathbb{R})$ is said to have finite \textbf{quadratic variation} along the sequence of partition $\pi = (\pi_n)_{n \geq 1}$ if for all $ t \in [0, T]$ the limit
\begin{align}
[x]_\pi(t) := \lim_{n \rightarrow \infty} \sum_{t^n_{i + 1} \leq t}\left(x(t_{i+1}^n) - x(t_i^n)\right)^2 < \infty
\end{align}
exists and the    function $t\mapsto [x]_\pi(t)$  is a continuous  increasing function. \\
We denote the set of paths with finite quadratic variation along $\pi$ by $Q^\pi([0, T], \mathbb{R})$. \\
The definition in the case where the path $x$ is $d$-dimensional is slightly different. A path $x = (x^1, ..., x^d) \in C^0([0, T], \mathbb{R}^d)$ is said to have finite quadratic variation along $\pi$ if $x^i \in  Q^\pi([0, T], \mathbb{R})$ for all $i = 1, \ldots, d$ and if $x^i + x^j \in Q^\pi([0, T], \mathbb{R})$ for all $i, j = 1, \ldots, d$. If this is the case, then we have:
\begin{align*}
\sum_{t_k^n \in \pi_n, t_{k+1}^n \leq  t} \left(x^i(t_{k+1}^n) - x^i(t_k^n)\right) \cdot \left(x^j(t_{k+1}^n) - x^j(t_k^n)\right) 
\\
\xrightarrow{n \to\infty}[x]_{ij}(t) = \frac{[x^i+x^j](t) - [x^i](t) - [x^j](t)}{2}.
\end{align*}
The matrix function $[x]_\pi : [0, T] \rightarrow S_d^+$, whose elements $[x]_{ij}$ are defined as above, is called the quadratic covariation of the path $x$.

In the sequel we will fix a sequence of partitions $\pi$ and assume that 
\eq{\label{2eq:oscassump}
osc(\omega, \pi_n) :=\max_{i=\overline{0,\, m(n)-1}}\sup_{t\in [t^n_i, t^n_{i+1}]}|\omega(t) - \omega(t^n_i)| \rightarrow 0.
}

Alternatively we can assume $|\pi_n| \rightarrow 0$, where $|\pi_n| := \sup\{|t_{i + 1}^n - t_i^n|, i = 1, ..., m(n)-1 \} $ denotes the mesh size of the partition $\pi_n$. This will imply \eqref{2eq:oscassump} due to the continuity of $\omega.$

\begin{proposition} \label{proposition:C1covaritatin}Let $\omega \in C^0([0,T], \Rr^d)\cap Q^\pi([0, T], \mathbb{R}^d)$ satisfying \eqref{2eq:oscassump}. For any $f\in C^1_b(\mathbb{R}^d, \Rr^d)$ the limit
\begin{align*}
 [ f(\omega), \omega]_{\pi}(t):=\lim_{n\to\infty}\sum_{k=0}^{m(n)-1} \left( f(\omega(t^n_{k+1}\wedge t)) - f(\omega(t^n_k\wedge t))\right)\cdot
  \left(\omega(t^n_{k+1}\wedge t)  - \omega(t^n_k\wedge t)\right)
\end{align*}
exists, and is equal to the Riemann–Stieltjes integral
\[
\int_0^t \langle \nabla f(\omega(u)),\, d[\omega]_{\pi}(u)\rangle.
\]
\end{proposition}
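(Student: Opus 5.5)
We would prove this by a first-order Taylor expansion of $f$ on each partition interval, followed by Föllmer's weak-convergence argument for the discrete quadratic variation measures. Write $\Delta_k\omega := \omega(t^n_{k+1}\wedge t)-\omega(t^n_k\wedge t)$. Since $f\in C^1_b$, the mean value theorem in integral form gives
\[
f(\omega(t^n_{k+1}\wedge t))-f(\omega(t^n_k\wedge t)) = \nabla f(\omega(t^n_k))\,\Delta_k\omega + r^n_k ,
\]
where
\[
r^n_k=\int_0^1\bigl(\nabla f(\omega(t^n_k)+\theta\Delta_k\omega)-\nabla f(\omega(t^n_k))\bigr)\,d\theta\;\Delta_k\omega .
\]
The sum in the statement then splits as
\[
\sum_k \bigl\langle \nabla f(\omega(t^n_k)),\, \Delta_k\omega\otimes\Delta_k\omega\bigr\rangle + \sum_k r^n_k\cdot \Delta_k\omega .
\]

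For the remainder term, let $\rho$ be the modulus of continuity of $\nabla f$ on the compact set $K$, the closed $1$-neighbourhood of $\omega([0,T])$. Uniform continuity on $K$ is all that is needed, so boundedness of $\nabla f$ is not even used here. Then
\[
|r^n_k\cdot\Delta_k\omega|\le \rho(osc(\omega,\pi_n))\,|\Delta_k\omega|^2 .
\]
Summing over $k$ gives a bound $\rho(osc(\omega,\pi_n))\sum_k|\Delta_k\omega|^2$. The sum $\sum_k|\Delta_k\omega|^2$ is bounded because it converges to $\mathrm{tr}[\omega]_\pi(t)$, and $\rho(osc(\omega,\pi_n))\to0$ by \eqref{2eq:oscassump}. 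So the remainder vanishes in the limit.

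For the main term, introduce the matrix-valued discrete measures
\[
\mu_n := \sum_{t^n_{k+1}\le t}\Delta_k\omega\otimes\Delta_k\omega\;\delta_{t^n_k}
\]
on $[0,t]$. The last, possibly partial, interval $[t^n_k, t]$ with $t^n_k<t<t^n_{k+1}$ contributes $|\Delta_k\omega|^2\le osc(\omega,\pi_n)^2\to0$, so it can be ignored. The diagonal entries of $\mu_n$, and the diagonal entries of the analogous measures built from the sums $\omega^i+\omega^j$, are positive measures. Their distribution functions converge pointwise on $[0,t]$ to the continuous functions $[\omega^i]_\pi$ and $[\omega^i+\omega^j]_\pi$. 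Pointwise convergence of distribution functions to a continuous limit implies weak convergence of the measures. By polarization, each entry of $\mu_n$ is a signed combination of three such positive measures, so each entry converges weakly to $d[\omega]_{ij}$.

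The integrand $u\mapsto\nabla f(\omega(u))$ is continuous on $[0,t]$. However, the main term evaluates it at left endpoints, so it equals $\int \langle \nabla f(\omega(u^n)), d\mu_n(u)\rangle$ with $u^n$ the left endpoint of the partition interval containing $u$, rather than $\int \langle \nabla f(\omega(u)), d\mu_n(u)\rangle$. This discrepancy is the one delicate point. Since $\mu_n$ is a sum of atoms located exactly at the points $t^n_k$, we have
\[
\sum_k\langle \nabla f(\omega(t^n_k)),\Delta_k\omega\otimes\Delta_k\omega\rangle=\int_{[0,t]}\langle \nabla f(\omega(u)), d\mu_n(u)\rangle
\]
exactly, so no error arises. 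Weak convergence applied to the three positive components of each entry then gives convergence to $\int_0^t\langle\nabla f(\omega(u)),d[\omega]_\pi(u)\rangle$. This limit is a Riemann–Stieltjes integral, since the integrand is continuous and $[\omega]_\pi$ has bounded variation, being a difference of increasing functions.

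The main obstacle is justifying this weak convergence for the off-diagonal, signed entries. We handle it exactly as above: polarize into the positive measures associated with $\omega^i$, $\omega^j$, $\omega^i+\omega^j$, using the definition of $Q^\pi$ in dimension $d$. Combining the two steps proves both that the limit exists and that it equals the stated integral.
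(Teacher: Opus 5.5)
Your proof is correct and follows the paper's argument. It uses the same first-order Newton--Leibniz expansion, the same remainder bound $\rho(osc(\omega,\pi_n))\sum_k|\Delta_k\omega|^2\to 0$, and weak convergence of the atomic measures $\sum_k \Delta_k\omega\otimes\Delta_k\omega\,\delta_{t^n_k}$ tested against the continuous integrand $\nabla f(\omega(\cdot))$. The only difference is that you prove that weak convergence yourself, via convergence of distribution functions and polarization through $\omega^i+\omega^j$, and you handle the truncated last interval explicitly, whereas the paper simply cites \cite{CF10B}.
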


\begin{proof}
Let $B$ be a closed ball containing $\omega([0, T])$, since $\nabla f$ is continuous on $B$, it is also uniformly continuous there, this implies that
\[
\rho_{\nabla f}(\delta):= \sup_{|x-y|\leq\delta,\, x, y\in B} |\nabla f(x) -\nabla f(y)| \xrightarrow{\delta\to 0+} 0.
\]
The Newton-Leibniz change of variable formula yields
\begin{align*}
 f(\omega(t^n_{k+1})) - f(\omega(t^n_k))
 &= \nabla f(\omega(t^n_k))\cdot \delta^n_k\omega \, 
\\
+&\,
\underbrace{ \int_{0}^{1} \left(\nabla f(\omega(t^n_k)+u \delta^n_k\omega ) - \nabla f(\omega(t^n_k))\right)\cdot \delta^n_k\omega \ \d s}_{r^n_k}
\end{align*}
where $\delta_k^n\omega = \omega(t^n_{k+1}) - \omega(t^n_k)$. 
Hence, we have
\begin{align*}
\sum_{t^n_k\leq t} \left( f(\omega(t^n_{k+1})) - f(\omega(t^n_k))\right)\cdot \left(\omega(t^n_{k+1})  - \omega(t^n_k)\right)\\
 = \underbrace{\sum_{t^n_k\leq t} \langle\nabla f(\omega(t^n_k)),\, \delta^n_k\omega\, ^t\delta^n_k\omega\rangle}_{I_1^n(t)}\, +\, \underbrace{\sum_{t^n_k\leq t} \delta^n_k\omega\cdot r^n_k}_{I^n_2(t)}.
\end{align*}
We have $I^n_1(t)\xrightarrow{n \to \infty} \int_0^t \langle \nabla f(\omega(u)),\, d[\omega]_{\pi}(u)\rangle,$ since $t\mapsto \nabla f(\omega(t))$ is continuous and the measures $\mu_{_{n, \omega}}:=\sum_{t^n_k\leq t}\delta^n_k\omega\, ^t\delta^n_k\omega\, \delta_{t^n_k}$ converge weakly to $[\omega]_{\pi}(t),$ for details see e.g. \cite{CF10B}.

Thus it remains to prove $I^n_2(t)\xrightarrow{n \to \infty}0$, to do that we note that by convexity $\omega(t^n_k)+u \delta^n_k\omega\in B$, so
\[
\left|\nabla f(\omega(t^n_k)+u \delta^n_k\omega ) - \nabla f(\omega(t^n_k)\right|\leq \rho_f(|\delta^n_k\omega |),
\]
so
\[
|\delta^n_k\omega\cdot r^n_k|\leq \rho_f(|\delta^n_k\omega |)|\delta^n_k\omega|^2. 
\]
Consequently
\[
|I^n_2(t)|\leq \sum_{\pi_n}|\delta^n_k\omega\cdot r^n_k|\leq \rho_f(|osc(\omega, \pi_n)|)\sum_{\pi_n}|\delta^n_k\omega|^2 \to 0.
\]
\end{proof}

\begin{corollary}\label{cor:Itocovformula}
 Let $\omega \in C([0,T], \Rr^d)\cap Q^\pi([0, T], \mathbb{R}^d)$ and $f\in C^2_b(\mathbb{R}^d, \Rr)$. Then the following limit change of variable formula holds
$$
f(\omega(T)) = f(\omega(0)) + \int_0^T \nabla f(\omega)d^\pi\omega + \frac{1}{2}[\nabla f(\omega), \omega]_\pi(T).
$$
where the integral $ \int_0^T \nabla f(\omega)d^\pi\omega$ is defined as the limit of left Riemann sums along the sequence of partitions $(\pi_n)_{n\geq 1}$:
\[
 \int_0^T \nabla f(\omega)d^\pi\omega:=\lim_{n\to\infty}\sum_{\pi_n} \nabla f(\omega(t^n_k))\cdot \left(\omega(t^n_{k+1}) - \omega(t^n_k)\right).
\]
\end{corollary}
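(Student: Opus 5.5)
The plan is to combine F\"ollmer's pathwise It\^o formula with Proposition~\ref{proposition:C1covaritatin}, applied to the vector field $\nabla f\in C^1_b(\mathbb{R}^d,\mathbb{R}^d)$. The covariation term will be identified with the Riemann--Stieltjes integral $\int_0^T\langle\nabla^2 f(\omega(u)),d[\omega]_\pi(u)\rangle$. The left Riemann sums will then be shown to converge by a telescoping identity, so that existence of the pathwise integral and the formula come out of the same computation. As in Proposition~\ref{proposition:C1covaritatin}, I assume \eqref{2eq:oscassump} along $\pi$.

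\textbf{Step 1: second-order Taylor expansion.} For each $n$, telescope
\[
f(\omega(T))-f(\omega(0))=\sum_{\pi_n}\bigl(f(\omega(t^n_{k+1}))-f(\omega(t^n_k))\bigr),
\]
and expand each increment to second order with integral remainder:
\[
f(\omega(t^n_{k+1}))-f(\omega(t^n_k))=\nabla f(\omega(t^n_k))\cdot\delta^n_k\omega+\tfrac12\langle\nabla^2 f(\omega(t^n_k)),\delta^n_k\omega\,{}^t\delta^n_k\omega\rangle+R^n_k .
\]
Let $B$ be a closed ball containing $\omega([0,T])$, and let $\rho$ be the modulus of continuity of $\nabla^2 f$ on $B$. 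Since $B$ is convex, the segment from $\omega(t^n_k)$ to $\omega(t^n_{k+1})$ stays in $B$, so $|R^n_k|\le \rho(|\delta^n_k\omega|)|\delta^n_k\omega|^2$. Summing over $k$ gives
\[
\sum_k|R^n_k|\le\rho(osc(\omega,\pi_n))\sum_k|\delta^n_k\omega|^2\to0,
\]
using \eqref{2eq:oscassump} and the boundedness of the discrete quadratic variations.

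\textbf{Step 2: the second-order sum.} The sum $\tfrac12\sum_k\langle\nabla^2 f(\omega(t^n_k)),\delta^n_k\omega\,{}^t\delta^n_k\omega\rangle$ is exactly $\tfrac12 I^n_1(T)$ from the proof of Proposition~\ref{proposition:C1covaritatin}, with $f$ there replaced by $\nabla f$. By the weak convergence argument given there, it converges to $\tfrac12\int_0^T\langle\nabla^2 f(\omega(u)),d[\omega]_\pi(u)\rangle$. Proposition~\ref{proposition:C1covaritatin} applied to $\nabla f$ identifies this limit with $\tfrac12[\nabla f(\omega),\omega]_\pi(T)$ and also guarantees that the covariation limit exists.

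\textbf{Step 3: the first-order sum.} Rearranging the identity from Step 1, the left Riemann sums $\sum_{\pi_n}\nabla f(\omega(t^n_k))\cdot\delta^n_k\omega$ equal $f(\omega(T))-f(\omega(0))$ minus the second-order sum minus $\sum_kR^n_k$. By Steps 1 and 2 this right-hand side converges. Hence the limit defining $\int_0^T\nabla f(\omega)\,d^\pi\omega$ exists, and it equals $f(\omega(T))-f(\omega(0))-\tfrac12[\nabla f(\omega),\omega]_\pi(T)$, which is the claimed formula.

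The only delicate point is Step 2. The weak convergence of the matrix-valued measures $\sum_k\delta^n_k\omega\,{}^t\delta^n_k\omega\,\delta_{t^n_k}$ to $d[\omega]_\pi$ needs the continuity of $[\omega]_\pi$, and the integrand $u\mapsto\nabla^2 f(\omega(u))$ is continuous, so it can be passed to the limit. This is already handled in Proposition~\ref{proposition:C1covaritatin}, so here it is simply cited. Everything else is routine.
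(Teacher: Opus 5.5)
Your proof is correct and follows essentially the same route as the paper. The paper simply cites F\"ollmer's pathwise It\^o formula to obtain the left Riemann-sum limit together with the term $\frac12\int_0^T\langle\nabla^2 f(\omega),d[\omega]_\pi\rangle$, whereas you rederive it inline by the telescoped second-order Taylor expansion, which is F\"ollmer's own argument; the identification of that term with $\frac12[\nabla f(\omega),\omega]_\pi(T)$ via Proposition~\ref{proposition:C1covaritatin} applied to $\nabla f\in C^1_b$ is exactly the paper's step, and stating the standing assumption \eqref{2eq:oscassump} explicitly is appropriate.
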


\begin{proof}
By F\"{o}llmer-It\^{o} change of variable formula (\cite{follmer1981})
\[
f(\omega(T)) = f(\omega(0)) + \int_0^T \nabla f(\omega)d^\pi\omega + \frac{1}{2}\int_0^T \langle\nabla^2 f(\omega),\, d[\omega]_\pi(t)\rangle.
\]
It remains to note that by Proposition \ref{proposition:C1covaritatin}
\[
[\nabla f(\omega), \omega]_\pi(T)=\int_0^T \langle\nabla^2 f(\omega(t)),\, d[\omega]_\pi(t)\rangle.
\]
\end{proof}

\section{Majorizing measures and function spaces}
In this section we associate with a path with finite quadratic variation $\omega \in C^0([0,T], \Rr^d)\cap Q^\pi([0, T], \mathbb{R}^d)$  a sequence of {\it weighted occupation measures} and use them to define a function space which provides the setting for our results in the next section.
For a continuous function $\phi\in C_c(\Rr^d)$, we denote $${\rm osc}(\phi, \delta):=\sup\{\|\phi(x)-\phi(y)\| \colon |x-y|\leq \delta\}.$$ Then $osc(\phi, \delta)\to 0$ as ${\delta \to 0}$. 
\subsection{Discrete approximations of weighted occupation measures}\label{sec:loctime}
Let $\pi_n := \{0 = t_0^n < t_1^n < ... < t^n_{m(n)} = T\}$ be a sequence of partitions of $[0, T]$, and  $\omega\in C[0, T]$ be path with finite quadratic variation $[\omega]_{\pi}$ along the partitions $\pi_n$ with \eqref{2eq:oscassump}.

Motivated by the estimates of Theorem \ref{theorem:Sobinq},  we introduce the following sequence of functions in $L^1(\Rr^d)$, which resemble the approximations of semimartingale local time in dimension $d=1$ defined in \cite{bertoin1987,perkowski2015,contdas2023,cp2018,contjin2024,davis2014}:
\begin{equation}
    \mathcal{L}^{\pi_n}_{\omega}(x):= \sum_{i=0}^{m(n)-1} \underbrace{c_{d} \left(\frac 1 {|x - \omega(t^n_i)|^{d-2}}\, +\, \frac 1 {|x - \omega(t^n_{i+1})|^{d-2}}\right) 1_{Q^n_i}(x)}_{\mathcal{L}^{n, i}_\omega(x)}
\end{equation}
If $\omega(t_{i+1}^n)=\omega(t_i^n)$, we set
$L_{\omega}^{n,i}\equiv 0$.

Here, $Q^n_i:=\left\{x\colon \left|x- \frac{\omega(t^n_i)\,+\,\omega(t^n_{i+1})}{2}\right|\leq \frac{|\omega(t^n_{i+1})\,-\,\omega(t^n_i)|}2\right\}$, and the constant $c_d$ is chosen such that 
$$
c_d
:=
\left[
\int_{B(0,1/2)}
\left(
\frac{1}{\left|x-\frac12 e_1\right|^{d-2}}
+
\frac{1}{\left|x+\frac12 e_1\right|^{d-2}}
\right)\,dx
\right]^{-1}
=
\frac{2}{|B(0,1)|}.
$$
so that by homogeneity $\int_{\Rr^d} \mathcal{L}^{n, i}_{\omega}(x) dx=|\omega(t^n_{i+1})\,-\,\omega(t^n_i)|^2$ and $$\int_{\Rr^d} \mathcal{L}^{\pi_n}_{\omega}(x) dx= \sum_{i=0}^{m(n)-1} |\omega(t^n_{i+1})\,-\,\omega(t^n_i)|^2.$$
\begin{remark}
In the one dimensional case $d=1$ the balls $Q^n_i$ reduce to intervals $[\omega(t^n_i), \omega(t^n_{i+1})]$, and the function $\mathcal{L}^{\pi_n}_{\omega}$ coincides (up to a multiplicative constant) with the sum of the discrete local times of the path $\omega$  along the partition $\pi_n$ and reversed path $\stackrel{\leftarrow}{\omega}= \omega(T- \cdot)$ along the reversed partition $\stackrel{\leftarrow}{\pi}_n:=\{ T- t^n_i\colon t^n_i\in \pi_n \}$ (see \cite{davis2014}).
\end{remark}

The following lemma confirms the relation  with the quadratic variation-weighted occupation measure:
\begin{lemma}\label{lemma:localtime}
For any path $\omega\in Q_{\pi}([0, T], \Rr^d)$ the sequence $\mathcal{L}^{\pi_n}_{\omega}$ converges weakly to the weighted occupation  measure $\mu^T_{\omega}$ defined by 
\[
\forall A\in {\cal B}(\mathbb{R}^d),\qquad \mu^T_{\omega}(A):= {\rm tr}\int_0^T 1_A(\omega(t)) \d [\omega]_{\pi} (t).
\]
\begin{align*}\forall \phi\in C_c(\Rr^d),\qquad 
\int_{\Rr^d} \phi(x) \mathcal{L}^{\pi_n}_{\omega}(x) \d x  \xrightarrow{n \to \infty} \int_{\Rr^d} \phi(x) d\mu^T_{\omega}(x)
= {\rm tr}\int_0^T (\phi\circ\omega\,) \d[\omega]_{\pi}.
\end{align*}
\end{lemma}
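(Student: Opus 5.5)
Fix $\phi\in C_c(\Rr^d)$. The plan is to compare $\int \phi\,\mathcal{L}^{\pi_n}_{\omega}$ with a discrete Riemann-sum approximation of ${\rm tr}\int_0^T \phi(\omega)\,\d[\omega]_\pi$ and show the error vanishes. By the normalisation of $c_d$ (homogeneity), each piece satisfies $\int \mathcal{L}^{n,i}_\omega = |\delta^n_i\omega|^2$, and $\mathcal{L}^{n,i}_\omega\ge 0$ is supported in the ball $Q^n_i$. The diameter of $Q^n_i$ is $|\delta^n_i\omega|\le 2\,osc(\omega,\pi_n)$, and $Q^n_i$ contains $\omega(t^n_i)$.

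First I would write
\[
\int_{\Rr^d}\phi\,\mathcal{L}^{\pi_n}_{\omega}\,\d x=\sum_i \phi(\omega(t^n_i))\,|\delta^n_i\omega|^2+\sum_i\int_{Q^n_i}\bigl(\phi(x)-\phi(\omega(t^n_i))\bigr)\mathcal{L}^{n,i}_\omega(x)\,\d x .
\]
Every $x\in Q^n_i$ satisfies $|x-\omega(t^n_i)|\le|\delta^n_i\omega|\le 2\,osc(\omega,\pi_n)$. Using the positivity and the mass identity, the second sum is therefore bounded by
\[
{\rm osc}\bigl(\phi,2\,osc(\omega,\pi_n)\bigr)\sum_i|\delta^n_i\omega|^2 .
\]
Here $\phi$ is uniformly continuous, so its modulus tends to zero by \eqref{2eq:oscassump}. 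The factor $\sum_i|\delta^n_i\omega|^2$ converges to ${\rm tr}[\omega]_\pi(T)$, hence is bounded. So the second sum tends to $0$.

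Next I would handle the first sum, which equals
\[
{\rm tr}\sum_i \phi(\omega(t^n_i))\,\delta^n_i\omega\,{}^t\delta^n_i\omega .
\]
This is exactly the pairing of the continuous function $t\mapsto\phi(\omega(t))$ with the discrete measures $\mu_{n,\omega}$ already used in the proof of Proposition \ref{proposition:C1covaritatin}. Those measures converge weakly to $\d[\omega]_\pi$ on $[0,T]$, as in \cite{CF10B}. The justification is that the quadratic variation is continuous and the distribution functions converge pointwise, hence uniformly by a Dini/P\'olya-type argument, and the off-diagonal entries follow by polarisation. Taking the trace gives convergence to ${\rm tr}\int_0^T\phi(\omega)\,\d[\omega]_\pi$. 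Finally, this limit equals $\int\phi\,d\mu^T_\omega$ by the definition of $\mu^T_\omega$ as the image measure.

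The only delicate point is this weak convergence of the discrete quadratic-variation measures. For the diagonal entries it follows from pointwise convergence of monotone functions to a continuous limit. The off-diagonal entries are signed, so I would write them via $[x^i+x^j]$, $[x^i]$, $[x^j]$, which are again monotone. Since the proposition already invokes this fact, I would simply cite it there.
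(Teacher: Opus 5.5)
Your proposal is correct and follows essentially the same argument as the paper. You split $\int\phi\,\mathcal{L}^{\pi_n}_\omega$ into the Riemann sum $\sum_i\phi(\omega(t^n_i))|\delta^n_i\omega|^2$ plus an error bounded by the modulus of continuity of $\phi$ at scale ${\rm osc}(\omega,\pi_n)$ times the total mass $\sum_i|\delta^n_i\omega|^2$, and then pass to the limit in the Riemann sum using weak convergence of the discrete quadratic-variation measures; you could simplify slightly, since only the trace (a sum of monotone diagonal entries) enters, so the polarisation step for the off-diagonal entries is unnecessary, and $|\delta^n_i\omega|\le {\rm osc}(\omega,\pi_n)$ holds without the factor $2$.
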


\begin{proof}
First note that on the ball $Q^n_i$ function $\phi$ differs from its value at $\omega(t^n_i)$ at most by 
$$osc(\phi, |\omega(t^n_{i+1})\,-\,\omega(t^n_i)|)\leq osc(\phi, \epsi_n),\text{ where }\epsi_n:=osc(\omega, \pi_n).$$ Thus we have
\eq{
\left|\int_{\Rr^d} \phi(x) \mathcal{L}^{\pi_n}_{\omega}(x\,) \d x - \sum_{i=0}^{m(n)-1}\int_{\Rr^d}  \mathcal{L}^{n, i}_{\omega}(x) \phi(\omega(t^n_i))\,  \d x \right|\nonumber\\ \leq
\int_{\Rr^d} \sum_{i=0}^{m(n)-1} \left|\phi(x)  - \phi(\omega(t^n_i)) \right|  \mathcal{L}^{n, i}_{\omega}(x)\, \d x\nonumber\\
\leq osc(\phi, \epsi_n) \int_{\Rr^d} \sum_{i=0}^{m(n)-1}\mathcal{L}^{n, i}_{\omega}(x)\, \d x=osc(\phi, \epsi_n) \int_{\Rr^d} \mathcal{L}^{\pi_n}_{\omega}(x) \d x\nonumber\\
=osc(\phi, \epsi_n) \sum_{i=0}^{m(n)-1} |\omega(t^n_{i+1})\,-\,\omega(t^n_i)|^2 \xrightarrow{n\to \infty}  0.\nonumber
}
It remains to note
\begin{align*}
\sum_{i=0}^{m(n)-1}\int_{\Rr^d}  \mathcal{L}^{n, i}_{\omega}(x) \phi(\omega(t^n_i))  \d x =
\sum_{i=0}^{m(n)-1}\phi(\omega(t^n_i)) |\omega(t^n_{i+1})\,-\,\omega(t^n_i)|^2\\
 \xrightarrow{n\to \infty}  \int_0^T \phi(\omega(t)) \d tr [\omega]_{\pi} (t).
\end{align*}
\end{proof}

\subsection{Sobolev trace along quadratic-variation paths}\label{sec:forSoblv}
We introduce a family of seminorms using the discrete weighted occupation measures $\mathcal{L}^{\pi_n}_{\omega}$:
\[
\|f\|_{_{\mathcal{W}^{0}_{\omega, \pi}}}:= \limsup_{n \to \infty}\int_{\Rr^d} |f(x)| \mathcal{L}^{\pi_n}_{\omega}(x) \d x,
\]
\begin{definition}[ Pathwise Sobolev space $\mathcal{W}^{2, \pi}_{\omega}$]\label{def:W-omega-pi}
Define   $\mathcal{W}^{2, \pi}_{\omega}$ to be the space of functions $F\in W^{2,1}(\Rr^d)$ for which there exists a sequence $(F_m)_{m\geq 1}$ such that
\eq{\label{2eq:W2pomega}
  F_m\in C^2_b(\Rr^d)\cap W^{2, 1}(\Rr^d)\qquad \qquad  \nonumber\\
F_m(\omega(t)) \to F(\omega(t)),\, \forall t\in [0, T],\quad
\|\nabla^2 F_m -\nabla^2 F\|_{\mathcal{W}^{0}_{\omega, \pi}}\xrightarrow{m\to \infty} 0.
}
\eq{\label{def:W2pdef}
\mathcal{W}^{2}_{\omega, \pi}:=\Big\{F\in W^{2,1}(\Rr^d)\colon\, \exists (F_m)_{m\geq 1}\  \text{\rm satisfying } \eqref{2eq:W2pomega}\Big\}.
}
\end{definition}
$W_{\omega,\pi}^2$ is a Sobolev trace domain along $(\omega,\pi)$.   It is not the range of a classical
trace operator; rather, it is the domain of Sobolev functions whose
values along $\omega$ and whose second-order contribution along the
quadratic-variation occupation measures can be defined consistently
by smooth approximation.

It is immediate that
$$
C_b^2(\mathbb{R}^d)\cap W^{2,1}(\mathbb{R}^d)
\subset W_\omega^{2,\pi}.
$$
The following remark further elucidates the structure of this space in the case where the
weighted occupation measure admits an $L^r$ density.
\begin{remark}[Case of paths with weighted occupation density]
{\em 
Let
$$
\omega\in C([0,T];\mathbb{R}^{d})
\cap Q_{\pi}([0,T];\mathbb{R}^{d}),
$$
and suppose that there exists a
nonnegative function
$
\ell^{\omega,\pi}_{T}\in L^{r}(\mathbb{R}^{d})
$
such that
$$
L_{\omega}^{\pi_n}
\rightharpoonup
\ell^{\omega,\pi}_{T}
\qquad\text{in }L^{r}(\mathbb{R}^{d}),\qquad r\in(1,\infty]
$$
where weak convergence is replaced by weak-$*$ convergence when
$r=\infty$. Let $r'=r/(r-1)$, with $r'=1$ when $r=\infty$.

The weak convergence of the measures
$L_{\omega}^{\pi_n}(x)dx$ established in Section~\ref{sec:loctime} identifies
$\ell^{\omega,\pi}_{T}$ as the density of the quadratic-variation
occupation measure:
$$
\int_{\mathbb{R}^{d}}
\varphi(x)\ell^{\omega,\pi}_{T}(x),dx
=
\int_{0}^{T}
\varphi(\omega(t)),d\operatorname{tr}[\omega]_{\pi}(t),
\qquad
\varphi\in C_c(\mathbb{R}^{d}).
$$
Equivalently,
$$
\mu_{\omega,T}^{\pi}(dx)
:=
\int_{0}^{T}
\delta_{\omega(t)}(dx),
d\operatorname{tr}[\omega]_{\pi}(t)
=
\ell^{\omega,\pi}_{T}(x),dx.
$$

For every $h\in L^{r'}(\mathbb{R}^{d})$, the seminorm of
Definition~\ref{def:W-omega-pi} has the explicit representation
$$
\begin{aligned}
\|h\|_{W^{0}_{\omega,\pi}}
&=
\limsup_{n\to\infty}
\int_{\mathbb{R}^{d}}
\|h(x)\|L_{\omega}^{\pi_n}(x) dx
\
&=
\int_{\mathbb{R}^{d}}
|h(x)|\ell^{\omega,\pi}_{T}(x) dx
=
|h|_{L^{1}(\mu_{\omega,T}^{\pi})}.
\end{aligned}
$$
Indeed, $\|h\|\in L^{r'}$ is an admissible test function for the weak
$L^{r}$ convergence.

Consequently,
$$
\{
F\in W^{2,1}(\mathbb{R}^{d})\cap C(\mathbb{R}^{d}):
\nabla^{2}F\in L^{r'}(\mathbb{R}^{d})
\}
\subseteq
W_{\omega}^{2,\pi}.
$$
To verify this inclusion, let $F_m=\eta_m*F$ be a standard
mollification. Then
$
F_m\in C_b^{2}(\mathbb{R}^{d})\cap W^{2,1}(\mathbb{R}^{d}),
$
and, since $\omega([0,T])$ is compact,
$$
\sup_{t\in[0,T]}
|F_m(\omega(t))-F(\omega(t))|
\mathop{\longrightarrow}^{m\to\infty}0.
$$
Moreover,
$$
\nabla^{2}F_m\mathop{\longrightarrow}^{m\to\infty}\nabla^{2}F
\qquad\text{in }L^{r'}(\mathbb{R}^{d}),
$$
and therefore
$$
\begin{aligned}
\|\nabla^{2}F_m-\nabla^{2}F\|_{W^{0}_{\omega,\pi}}
&=
\int_{\mathbb{R}^{d}}
|\nabla^{2}F_m-\nabla^{2}F|
,\ell^{\omega,\pi}_{T} dx
\\
&\leq
\|\ell^{\omega,\pi}_{T}\|_{L^{r}}
\|\nabla^{2}F_m-\nabla^{2}F\|_{L^{r'}}
\longrightarrow 0.
\end{aligned}
$$

Thus, on the dual space $L^{r'}$, the path-dependent Hessian topology
is exactly the weighted topology
$$
L^{1}\bigl(\ell^{\omega,\pi}_{T}(x),dx\bigr).
$$
This does not in general characterize all of
$W_{\omega}^{2,\pi}$: weak $L^{r}$ convergence of the discrete
densities only controls Hessian errors belonging to $L^{r'}$, whereas
Definition~\ref{def:W-omega-pi} allows weak Hessians that are merely in $L^{1}$. The
full space may therefore retain information about the discrete
sequence $(L_{\omega}^{\pi_n})_n$ which is not encoded by the limiting
occupation density alone.}
\end{remark}

\begin{remark}[One dimensional case]
{\em When $d=1$, the seminorm of Definition~ \ref{def:W-omega-pi} may be related to the
discrete local times of Davis--Ob{\l}\'oj--Siorpaes \cite{davis2014} as follows.
Let $\lambda_T^{n,+}$ denote the discrete local time of $\omega$
along $\pi_n$, and let $\lambda_T^{n,-}$ denote the discrete local
time of the reversed path $
\overleftarrow{\omega}(t):=\omega(T-t)
$  
along the time-reversed partition $\overleftarrow{\pi}_n$. Up to the
irrelevant choice of interval endpoints,
$$
L_{\omega}^{\pi_n}(u)
=
\frac12
\left(
\lambda_{T}^{n,+}(u)+\lambda_{T}^{n,-}(u)
\right).
$$
Indeed, for a single increment from $a$ to $b$, the forward and
reversed discrete local-time contributions are
$$
2|b-u|\mathbf{1}_{[a\wedge b,a\vee b)}(u)\quad 
{\rm and}
\quad
2|a-u|\mathbf{1}_{[a\wedge b,a\vee b)}(u),
$$
respectively. Their half-sum is
$|b-a|\mathbf{1}_{[a\wedge b,a\vee b)}(u),$
which is precisely the corresponding contribution to
$L_{\omega}^{\pi_n}$ in dimension one.

Suppose that, for some $r\in(1,\infty]$, both discrete local-time
sequences converge to the same terminal local time:
$$
\lambda_{T}^{n,+}
\rightharpoonup
\ell_{T},
\qquad
\lambda_{T}^{n,-}
\rightharpoonup
\ell_{T}
\qquad\text{in }L^{r}(\mathbb{R}),
$$
with weak-$*$ convergence when $r=\infty$. Then
$$
L_{\omega}^{\pi_n}
\rightharpoonup
\ell_{T}
\qquad\text{in }L^{r}(\mathbb{R}),
$$
and hence, with $r'=r/(r-1)$,
$$
\|h\|_{W^{0}_{\omega,\pi}}
=
\int_{\mathbb{R}}|h(u)|\ell_{T}(u),du,
\qquad
h\in L^{r'}(\mathbb{R}).
$$
Since
$
W^{2,1}(\mathbb{R})\subset C^{1}(\mathbb{R}),
$
the trace condition in Definition~\ref{def:W-omega-pi} is automatic for every
$F\in W^{2,1}(\mathbb{R})$. It follows that
$$
\{
F\in W^{2,1}(\mathbb{R}):
F''\in L^{r'}(\mathbb{R})\ \}
\subseteq
W_{\omega}^{2,\pi}.
$$
Equivalently,
$$
W_{\omega}^{2,\pi}
\cap
\{
F\in W^{2,1}(\mathbb{R}):
F''\in L^{r'}(\mathbb{R})
\}
=
\{
F\in W^{2,1}(\mathbb{R}):
F''\in L^{r'}(\mathbb{R})
\}.
$$

In the bounded local-time case,
$$
L_{\omega}^{\pi_n}
\mathop{\to}^{*}
\ell_T
\qquad\text{in }L^{\infty}(\mathbb{R}),
$$
the Banach--Steinhaus theorem gives
$$
\sup_{n\geq1}
\|L_{\omega}^{\pi_n}\|_{L^{\infty}}<\infty.
$$
For every $F\in W^{2,1}(\mathbb{R})$, standard mollification then
yields
$$
\begin{aligned}
\|F_m''-F''\|_{W^{0}_{\omega,\pi}}
&\leq
\sup_{n\geq1}
\|L_{\omega}^{\pi_n}\|_{L^{\infty}}
\|F_m''-F''\|_{L^{1}}
\
&\longrightarrow0.
\end{aligned}
$$
Since $W_{\omega}^{2,\pi}$ is, by definition, a subspace of
$W^{2,1}(\mathbb{R})$, one obtains the exact identification
$$
W_{\omega}^{2,\pi}
=
W^{2,1}(\mathbb{R}).
$$
If a  local time $(\ell_t)_{t\in[0,T]}$ exists in the
sense of \cite{davis2014} then, whenever both pathwise constructions
apply to the same function $F$, comparison of the two
change-of-variable formulae gives
$$
[F'(\omega),\omega]_{\pi}(t)
=
\int_{\mathbb{R}}\ell_t(u)F''(u),du.
$$
Accordingly,
$$
F(\omega(t))-F(\omega(0))
=
\int_{0}^{t}F'(\omega(s)),d^{\pi}\omega(s)
+
\frac12
\int_{\mathbb{R}}\ell_t(u)F''(u),du.
$$
Thus, in dimension one, the covariation term in Theorem~\ref{theorem:weakchangeofvar} agrees
with the local-time pairing of Davis--Ob{\l}\'oj--Siorpaes whenever
the two theories are simultaneously applicable.}
\end{remark}
Let us define  the seminorms
\[
\|h\|_{n, p, \omega}:=\left(   \sum_{\pi_n}\left( |h(\omega(t_i^n)|^p\,+\,|h(\omega(t_{i+1}^n)|^p\right) |t_{i+1}^n-t_i^n|\right)^{1/p}.
\]
Assume without loss of generality  ${\rm osc}(\omega, \pi_n)\leq 1$. Then from Theorem \ref{theorem:Sobineq}
\eq{\label{2eq:fLMf}
\int_{\Rr^d} |f(x)| \mathcal{L}^{\pi_n}_{\omega}(x) \d x \leq \sum_{i=0}^{m(n)-1} \left(M_1(f )(\omega(t^n_{i+1}))\,+\,M_1(f)(\omega(t^n_i))\right)|\omega(t^n_{i+1})\,-\,\omega(t^n_i)|^2
}
Define
$$
H_n(t)
:=
\sum_{i=0}^{m(n)-1}
\left(
M_1f(\omega(t_i^n))
+
M_1f(\omega(t_{i+1}^n))
\right)
\mathbf{1}_{[t_i^n,t_{i+1}^n)}(t),
$$
and
$$
\rho_{n,\omega}(t)
:=
\sum_{i=0}^{m(n)-1}
\frac{
|\omega(t_{i+1}^n)-\omega(t_i^n)|^2
}{
t_{i+1}^n-t_i^n
}
\mathbf{1}_{[t_i^n,t_{i+1}^n)}(t).
$$
Then the right-hand side of~\eqref{2eq:fLMf} is
$$
\int_0^T H_n(t)\rho_{n,\omega}(t)\,dt.
$$
Consequently, if
$$
\sup_{n\geq 1}
\|\rho_{n,\omega}\|_{L^q([0,T])}<\infty,
\qquad
\frac1p+\frac1q=1,
$$
H\"older's inequality gives
$$
\begin{aligned}
\int_{\mathbb{R}^d}|f(x)|L_\omega^{\pi_n}(x)\,dx
&\leq
\int_0^T H_n(t)\rho_{n,\omega}(t)\,dt
\leq
\|H_n\|_{L^p([0,T])}
\|\rho_{n,\omega}\|_{L^q([0,T])}.
\end{aligned}
$$
Since
$$
\|H_n\|_{L^p([0,T])}
\leq C_p\|M_1f\|_{n,p,\omega},
$$
we obtain
\begin{equation}
\|f\|_{W^0_{\omega,\pi}}
\leq
C_{p,\omega}
\limsup_{n\to\infty}
\|M_1f\|_{n,p,\omega},
\qquad
C_{p,\omega}
:=
C_p\sup_{n\geq1}
\|\rho_{n,\omega}\|_{L^q([0,T])}.
\label{2eq:normMf}
\end{equation}
Lemma~\ref{lemma:qvar} shows that the required uniform bound holds almost surely
when $\omega$ is a Brownian path and the partitions satisfy
$\sum_n|\pi_n|<\infty$.
 
We will use this estimate \eqref{2eq:normMf} in Theorem \ref{lemma:Sobinc} to prove the inclusion of certain Sobolev spaces in $\mathcal{W}^{2, \pi}_{\omega}$ when $\omega $ is a  typical   Brownian path.

\section{Change of variable formulas}\label{sec:main}
\subsection{Pathwise Ito formula for weakly differentiable functions}
We are now ready to prove our  main result.

\begin{theorem}[Pathwise integration and weak change-of-variable formula]\label{theorem:weakchangeofvar}
Let
$\omega\in C([0,T];\mathbb{R}^d)\cap Q_{\pi}([0,T];\mathbb{R}^d)$
satisfy \emph{(6)}, and let
$F\in\mathcal{W}_{\omega,\pi}^{2}$.
Let
$\{F_m\}_{m\geq 1}\subset C_b^2(\mathbb{R}^d)\cap
W^{2,1}(\mathbb{R}^d)$
be any sequence satisfying Definition~\ref{def:W-omega-pi}, namely

\begin{equation}
F_m(\omega(t))\longrightarrow F(\omega(t)),
\qquad t\in[0,T],
\label{eq:pathwise-approximation-F}
\end{equation}
\begin{equation}
\left\|
\nabla^2F_m-\nabla^2F
\right\|_{W_{\omega,\pi}^{0}}
\longrightarrow 0.
\label{eq:pathwise-approximation-Hessian}
\end{equation}
For every $t\in[0,T]$, the limits
\begin{equation}
\lim_{m\to\infty}
\int_0^t
\nabla F_m(\omega(s))\,d^{\pi}\omega(s)
\label{eq:smooth-integral-limit}
\end{equation}
\begin{equation}
{\rm and}\qquad [\nabla F(\omega),\omega]_{\pi}(t)
:=
\lim_{m\to\infty}
[\nabla F_m(\omega),\omega]_{\pi}(t)
\label{eq:covariation-by-approximation}
\end{equation}
exist and are independent of the sequence $(F_m)_{m\geq 1}$ satisfying
\eqref{eq:pathwise-approximation-F} and
\eqref{eq:pathwise-approximation-Hessian}.
We  define the integral of $\nabla F(\omega)$ with
respect to $\omega$ along $\pi$ by
\begin{equation}
\int_0^t
\nabla F(\omega(s))\,d^{\pi}\omega(s)
:=
\lim_{m\to\infty}
\int_0^t
\nabla F_m(\omega(s))\,d^{\pi}\omega(s).
\label{eq:pathwise-integral-by-approximation}
\end{equation}
This definition does not require the weak gradient $\nabla F$ to have a
finite pointwise representative at the partition points. Then
\begin{equation}
F(\omega(t))-F(\omega(0))
=
\int_0^t
\nabla F(\omega(s))\,d^{\pi}\omega(s)
+
\frac{1}{2}
[\nabla F(\omega),\omega]_{\pi}(t).
\label{eq:pathwise-Ito-approximation}
\end{equation}
\end{theorem}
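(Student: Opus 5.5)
The plan is to show that the covariation terms form a Cauchy sequence in $m$ and then recover the integral from Corollary \ref{cor:Itocovformula}. For each $m$, Corollary \ref{cor:Itocovformula} applied to $F_m\in C^2_b$ on $[0,t]$ gives
\[
F_m(\omega(t))-F_m(\omega(0))=\int_0^t\nabla F_m(\omega)\,d^\pi\omega+\tfrac12[\nabla F_m(\omega),\omega]_\pi(t).
\]
The corollary is stated at $T$, but restricting the path and partitions to $[0,t]$ gives the same statement. By \eqref{eq:pathwise-approximation-F} the left side converges to $F(\omega(t))-F(\omega(0))$. So it suffices to show that $[\nabla F_m(\omega),\omega]_\pi(t)$ converges to a limit independent of the sequence. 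Then the limit \eqref{eq:smooth-integral-limit} exists, is independent of the sequence, and \eqref{eq:pathwise-Ito-approximation} follows by passing to the limit in the identity above.

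The key estimate is that for $G\in C^2_b\cap W^{2,1}$,
\[
\bigl|[\nabla G(\omega),\omega]_\pi(t)\bigr|\le C_d\,\|\nabla^2 G\|_{\mathcal W^0_{\omega,\pi}},
\]
uniformly in $t$. We apply this to $G=F_m-F_{m'}$, or to $F_m-\tilde F_{m'}$ for two admissible sequences. Since $\|\cdot\|_{\mathcal W^0_{\omega,\pi}}$ is a seminorm (limsup of integrals is subadditive), \eqref{eq:pathwise-approximation-Hessian} gives $\|\nabla^2(F_m-F_{m'})\|_{\mathcal W^0_{\omega,\pi}}\to0$. This yields both the Cauchy property and the independence of the limit via interleaving.

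To prove the estimate, write the covariation as the limit of discrete sums
\[
\sum_k\bigl(\nabla G(\omega(t^n_{k+1}))-\nabla G(\omega(t^n_k))\bigr)\cdot\delta^n_k\omega,
\]
which is legitimate by Proposition \ref{proposition:C1covaritatin} since $\nabla G\in C^1_b$. Bound each term by $|\nabla G(y)-\nabla G(x)|\,|y-x|$ with $x=\omega(t^n_k)$ and $y=\omega(t^n_{k+1})$. Then apply the first inequality of Theorem \ref{theorem:Sobinq} to $\nabla G$ (componentwise, with gradient $\nabla^2 G$), taking $Q$ to be the ball $Q^n_k$ with diameter $[x,y]$; its endpoints lie in $Q^n_k$. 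This gives
\[
|\nabla G(y)-\nabla G(x)|\le C_d\int_{Q^n_k}|\nabla^2G(z)|\bigl(|x-z|^{1-d}+|y-z|^{1-d}\bigr)dz.
\]

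This is the main obstacle. Multiplying by $|y-x|$, we need to dominate $|y-x|\,|x-z|^{1-d}$ by a constant times $|x-z|^{2-d}$ on $Q^n_k$, but $|x-z|$ can be much smaller than $|y-x|$, so this direct bound fails. First, I would instead use the second-order inequality of Theorem \ref{theorem:Sobinq}, which already carries the kernel $|\cdot-z|^{2-d}$ appearing in $\mathcal L^{n,k}_\omega$. Summing the Taylor remainders from $x$ to $y$ and from $y$ to $x$ gives
\[
(\nabla G(y)-\nabla G(x))\cdot(y-x)=R(x,y)+R(y,x),\qquad R(x,y)=G(y)-G(x)-\nabla G(x)(y-x),
\]
and each remainder is bounded by $C_d\int_{Q^n_k}|\nabla^2G|(|x-z|^{2-d}+|y-z|^{2-d})\,dz=C\int|\nabla^2G|\,\mathcal L^{n,k}_\omega$. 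This identity is exact, so the obstacle disappears.

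Summing over $k$ gives
\[
\Bigl|\sum_k\cdots\Bigr|\le C\int|\nabla^2 G|\,\mathcal L^{\pi_n}_\omega\,dx.
\]
Taking $\limsup_n$ yields the key estimate. The same bound holds for the truncated sums at $t$, since these involve only a sub-collection of terms plus one partial increment, which vanishes by \eqref{2eq:oscassump}. Finally, the degenerate terms where $\omega(t^n_{k+1})=\omega(t^n_k)$ contribute zero on both sides, consistent with the convention $\mathcal L^{n,k}_\omega\equiv0$.
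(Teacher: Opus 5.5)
Your proposal is correct and follows essentially the same route as the paper: you obtain the key bound $|[\nabla H(\omega),\omega]_\pi(t)|\le C_d\|\nabla^2 H\|_{\mathcal W^0_{\omega,\pi}}$ exactly as the paper does, by applying the second-order inequality of Theorem~\ref{theorem:Sobinq} twice with the roles of $\omega(t^n_k)$ and $\omega(t^n_{k+1})$ exchanged. Your identity $(\nabla G(y)-\nabla G(x))\cdot(y-x)=R(x,y)+R(y,x)$ and your treatment of the truncated partial increment simply make explicit what the paper leaves implicit. After that, the Cauchy argument, the recovery of the integral from the smooth It\^o formula, and the passage to the limit match the paper. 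The only cosmetic difference is that you get independence of the approximating sequence by interleaving, whereas the paper estimates the cross-differences $F_m-G_k$ directly; the two are equivalent.
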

\begin{proof}

For $H\in C_b^2(\mathbb{R}^d)\cap W^{2,1}(\mathbb{R}^d)$, using the second inequality in Theorem \eqref{theorem:Sobinq} twice with $x=\omega(t^n_i)), y= \omega(t^n_{i+1})$ and with reversed roles  $x= \omega(t^n_{i+1}),  y=\omega(t^n_i))$
we get
\[
|\left(\nabla H(\omega(t^n_{k+1}))- \nabla H(\omega(t^n_k))\right)\cdot \left(\omega(t^n_{k+1}) - \omega(t^n_k)\right)|\leq 
C_d\int_{\Rr^d} \nabla^2H(x) \mathcal{L}^{n, i}_{\omega}(x) dx.
\]

Summing up these inequalities over the partition points in the interval $[0,t]$, and taking the limit gives 

\begin{equation}
\left|
[\nabla H(\omega),\omega]_{\pi}(t)
\right|
\leq
C_d
\left\|
\nabla^2H
\right\|_{W_{\omega,\pi}^{0}}.
\label{eq:smooth-covariation-estimate}
\end{equation}

Applying this estimate to $H=F_m-F_k$ gives

\begin{equation}
\begin{aligned}
\bigl|
[\nabla F_m(\omega),\omega]_{\pi}(t)
-
[\nabla F_k(\omega),\omega]_{\pi}(t)
\bigr|
&\leq
C_d
\left\|
\nabla^2F_m-\nabla^2F_k
\right\|_{W_{\omega,\pi}^{0}}                                                        \\
\leq
C_d
\left\|
\nabla^2F_m-\nabla^2F
\right\|_{W_{\omega,\pi}^{0}}  &       +
C_d
\left\|
\nabla^2F_k-\nabla^2F
\right\|_{W_{\omega,\pi}^{0}}.
\end{aligned}
\label{eq:covariation-Cauchy-estimate}
\end{equation}

By \eqref{eq:pathwise-approximation-Hessian}, the right-hand side tends
to zero as $m,k\to\infty$. Hence

\begin{equation}
\left\{
[\nabla F_m(\omega),\omega]_{\pi}(t)
\right\}_{m\geq 1}
\end{equation}
is a Cauchy sequence. This proves the existence of the limit in
\eqref{eq:covariation-by-approximation}.

For every $m$, the pathwise Itô formula for the smooth function $F_m$
gives

\begin{equation}
F_m(\omega(t))-F_m(\omega(0))
=
\int_0^t
\nabla F_m(\omega(s))\,d^{\pi}\omega(s)
+
\frac{1}{2}
[\nabla F_m(\omega),\omega]_{\pi}(t).
\label{eq:smooth-Ito-formula-Fm}
\end{equation}

Equivalently,

\begin{equation}
\int_0^t
\nabla F_m(\omega(s))\,d^{\pi}\omega(s)
=
F_m(\omega(t))-F_m(\omega(0))
-
\frac{1}{2}
[\nabla F_m(\omega),\omega]_{\pi}(t).
\label{eq:smooth-integral-representation}
\end{equation}

The first two terms on the right-hand side converge by
\eqref{eq:pathwise-approximation-F}, while the covariation term converges
by \eqref{eq:covariation-Cauchy-estimate}. It follows that the limit in
\eqref{eq:smooth-integral-limit} exists.

It remains to prove that the limit does not depend on the choice of the
approximating sequence. Let $\{G_k\}_{k\geq 1}$ be another sequence
satisfying Definition~\ref{def:W-omega-pi} for the same function $F$.
Applying \eqref{eq:smooth-covariation-estimate} to $F_m-G_k$ yields

\begin{equation}
\begin{aligned}
&\bigl|
[\nabla F_m(\omega),\omega]_{\pi}(t)
-
[\nabla G_k(\omega),\omega]_{\pi}(t)
\bigr|\\
&\leq
C_d
\left\|
\nabla^2F_m-\nabla^2G_k
\right\|_{W_{\omega,\pi}^{0}} \\
&\leq
C_d
\left\|
\nabla^2F_m-\nabla^2F
\right\|_{W_{\omega,\pi}^{0}}+
C_d
\left\|
\nabla^2G_k-\nabla^2F
\right\|_{W_{\omega,\pi}^{0}}.
\end{aligned}
\label{eq:cross-covariation-estimate}
\end{equation}

The right-hand side tends to zero as $m,k\to\infty$. In addition,

\begin{equation}
(F_m-G_k)(\omega(t))\longrightarrow 0
\quad{\rm and}\qquad
(F_m-G_k)(\omega(0))\longrightarrow 0.
\end{equation}
Using the smooth Itô formula for $F_m-G_k$, we obtain

\begin{equation}
\begin{aligned}
&
\left|
\int_0^t
\nabla F_m(\omega(s))\,d^{\pi}\omega(s)
-
\int_0^t
\nabla G_k(\omega(s))\,d^{\pi}\omega(s)
\right|                                                             \\
&\qquad\leq
\left|(F_m-G_k)(\omega(t))\right|
+
\left|(F_m-G_k)(\omega(0))\right|                                   \\
&\qquad\quad+
\frac{1}{2}
\left|
[\nabla F_m(\omega),\omega]_{\pi}(t)
-
[\nabla G_k(\omega),\omega]_{\pi}(t)
\right|.
\end{aligned}
\label{eq:cross-integral-estimate}
\end{equation}

Every term on the right-hand side tends to zero as $m,k\to\infty$.
Therefore the limits obtained from $\{F_m\}$ and $\{G_k\}$ coincide.
This proves that
\eqref{eq:pathwise-integral-by-approximation} and
\eqref{eq:covariation-by-approximation} are well defined.
Finally, passing to the limit $m\to\infty$ in
\eqref{eq:smooth-Ito-formula-Fm} gives
\eqref{eq:pathwise-Ito-approximation}.
\end{proof}

\subsection{Sobolev trace along Brownian paths}
The next result justifies the definition of the space $\mathcal{W}^2_{\omega, \pi}$, showing in particular that it contains non-smooth functions for a class of $\omega$ which are typical paths of a Brownian motion.

Recall the definition of the Sobolev-Slobodeckij space $W^{2+s,p}(\Rr^d )$:
\[ 
W^{2+s,p}(\Rr^d ):=\left\{f\in W^{2 ,p}(\Rr^d): \left(\int_{\Rr^d} \int_{\Rr^d} \frac{|\nabla^2 f(x)\,-\,\nabla^2f(y)|^p}{|x-y|^{s p + d}} \; dx \; dy\right)^{\frac{1}{p}}   <\infty \right\}
\]
and let
\[ W^{2+, p}(\Rr^d):=\mathop{\cup}_{s\in (0, 1)}W^{2+s,p}(\Rr^d). \]
In this section we will require two assumptions on the sequence of partitions:
\begin{hyp}[Balanced partitions]\label{ass:partitioncond}
\eq{
M_1:=\sup_{n}\frac {|\pi_n|}{ |\underline{\pi}_n| }<+\infty,
}
where $|\pi_n|:=max_{i} (t^n_{i+1} - t^n_i),\, |\underline{\pi}_n|:=min_{i} (t^n_{i+1} - t^n_i)$.
\end{hyp}
\begin{hyp}\label{ass:partitioncond2}
There exists a constant $M_2>1$ such that
\eq{
1<M_2\leq \frac {|\pi_n|}{ |\pi_{n+1}| }\qquad    \, \forall n\geq 1.
}
\end{hyp}
\begin{theorem}
\label{lemma:Sobinc}
Let $(\Omega,B,P_x)$ be the canonical Wiener space as in Section \ref{sec:Bessel} and   $\pi=(\pi_n)_{n\geq1}$ be a sequence of partitions of $[0,T]$ satisfying Assumptions~\ref{ass:partitioncond} and \ref{ass:partitioncond2}. Let
$$
F \in W^{2+,p}(\mathbb{R}^d)\cap W^{2,1}(\mathbb{R}^d),
\qquad 1<p<\infty.
$$
Then there exists a polar set $E\subset\mathbb{R}^d$ such that, for every starting point
$x_0\notin E$, 
$$
\mathbb{P}_{x_0}\left(\ F\in W^2_{B,\pi}\  \right)=1.
$$
Equivalently, for $x_0\notin E$, there exists a sequence $(F_m)_{m\geq1}\subset C_b^2(\mathbb{R}^d)\cap W^{2,1}(\mathbb{R}^d)$ such that
$$
F_m(B(t))\mathop{\to}^{m\to\infty} F(B(t)),\qquad t\in[0,T],
$$
and
$$
\mathbb{P}_{x_0}\left(\  \|\nabla^2F_m-\nabla^2F\|_{W^0_{B,\pi}}\mathop{\to}^{m\to\infty} 0\right)=1.
$$
\end{theorem}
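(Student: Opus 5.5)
Take $F_m:=\eta_{1/m}\ast F$, a standard mollification. Then $F_m\in C^2_b\cap W^{2,1}$, and $\nabla^2F_m\to\nabla^2F$ in $L^p$ and in the $W^{s,p}$ seminorm. Write $h_m:=\nabla^2F_m-\nabla^2F$.

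The argument has three parts.

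\textbf{Step 1: pointwise convergence along the path.} We need $F_m(B(t))\to F(B(t))$ for all $t$. For $p$ large this is automatic, but in general it is not. Since $F\in W^{2,p}$, Theorem~\ref{theorem:Lebesgue} shows that the set $E_0$ of non-Lebesgue points of $F$ is $(2,p)$-polar. We then want $E_0$ to be polar in the Brownian sense, so that $P_{x_0}$-a.s. the path avoids $E_0$ for $t>0$.

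For $2p\ge 2$ this follows from the comparison of Bessel capacities: $B_{2,p}(E)=0$ implies $B_{1,2}(E)=0$. This holds when $2p>2$ up to the borderline case, using $W^{2,p}\hookrightarrow W^{1,2}$-type capacity monotonicity, or more directly $B_{1,2}\lesssim B_{2,p}$ whenever $2-d/p\ge 1-d/2$. Where the numerology fails, I would enlarge $E$ using the extra $s$ derivatives, i.e. the $(2+s,p)$-polar set.

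On the complement, $F_m(y)\to F(y)$ at every Lebesgue point. The time $t=0$ is handled by requiring $x_0\notin E_0$. I would put $E:=E_0$ together with any further capacity-null sets arising below. Countable subadditivity keeps $E$ polar.

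\textbf{Step 2: Hessian convergence via a Brownian bound.} By \eqref{2eq:normMf},
\[
\|h_m\|_{W^0_{B,\pi}}\le C_{p',B}\limsup_n\|M_1h_m\|_{n,p',B}
\]
for suitable exponents $p'$. Lemma~\ref{lemma:qvar} gives $C_{p',B}<\infty$ a.s., using the balanced and geometric partition assumptions \ref{ass:partitioncond} and \ref{ass:partitioncond2}, which imply $\sum_n|\pi_n|<\infty$.

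The Riemann sums $\|g\|_{n,p',B}^{p'}$ approximate $2\int_0^T|g(B_t)|^{p'}dt$. The task is therefore to control
\[
\sup_n\|M_1h_m\|_{n,p',B}
\]
uniformly in $n$ by a quantity tending to zero in $m$. For a rough function, Riemann sums need not converge to the integral. Here the fractional smoothness $s$ enters.

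First I would bound the expectation:
\[
E_{x_0}\big[\|g\|^{p'}_{n,p',B}\big]=\sum_i\big(p_{t_i}+p_{t_{i+1}}\big)\ast|g|^{p'}(x_0)\,\Delta t_i .
\]
This is controlled by a potential $U|g|^{p'}(x_0)$ with a kernel comparable to $g_2$ or $|x|^{2-d}$ near the origin. The bound is uniform in $n$ because the partitions are balanced.

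\textbf{Step 3: summing over scales and upgrading to a.s. convergence.} To get a.s. control of the $\limsup$, I would sum over scales. Write $M_1h_m$ using a dyadic (Littlewood--Paley) decomposition. The $W^{s,p}$ control gives geometric decay $2^{-ks}$ of the pieces, and Assumption~\ref{ass:partitioncond2} lets the sum over $n$ of the fluctuations between Riemann sum and integral at scale $|\pi_n|^{1/2}$ converge, through Borel--Cantelli.

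Choosing a subsequence $m_j$ with $\sum_j U\big(|M_1h_{m_j}|^{p'}\big)(x_0)<\infty$ then gives a.s. convergence along $m_j$. Relabelling the $F_m$ to be this subsequence suffices, since the statement only asks for the existence of some sequence.

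The starting-point condition is that the potential $U(|M_1 h|^{p'})(x_0)$ must be finite and must tend to zero. By the maximal inequality, $M_1h_m\to0$ in $L^p$. The potential of an $L^{p/p'}$ function is finite outside a Bessel-polar set, and the exceptional set of points where convergence along the subsequence fails is capacity-null, again via Theorem~\ref{theorem:Lebesgue}-type arguments. Adding these sets to $E$ finishes the proof.

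\textbf{Main obstacle.} The hardest step is the uniform-in-$n$ a.s. control of $\|M_1h_m\|_{n,p',B}$, with the Riemann sums compared to occupation integrals, for non-continuous $h_m$. This is exactly where $s>0$ and Assumption~\ref{ass:partitioncond2} are needed. I would attack it first through the second-moment estimate of
\[
\sum_i\Big(|g(B_{t_i})|^{p'}\Delta t_i-\int_{t_i}^{t_{i+1}}|g(B_t)|^{p'}dt\Big),
\]
using the heat-kernel regularity $\|p_t\ast g-g\|\lesssim t^{s/2}$.
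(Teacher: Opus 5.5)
Your outline has the same architecture as the paper's proof: mollification; Lebesgue points plus a capacity comparison for the trace; the bound \eqref{2eq:fLMf} and H\"older against $\rho_{n,B}$ via Lemma~\ref{lemma:qvar}; heat-kernel potentials; and a summable subsequence. But the step you call the main obstacle is the analytic core of the theorem (Lemma~\ref{lemma:Lpinc}), and you leave it as a plan. It helps first to see that no fluctuation estimate quantitative in $m$ is needed. Put $u_m=\|\nabla^2F_m-\nabla^2F\|$, $g_m=M_1u_m$ and $h_m:=g_m^p$. It suffices that, for each \emph{fixed} $k$, the Riemann sums $\sum_i h_{m_k}(B(t_i^n))\Delta_i^n$ converge a.s. to $\int_0^T h_{m_k}(B(t))\,dt$. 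Then $\limsup_n\|h_{m_k}\|_{n,1,B}\le C\int_0^T h_{m_k}(B(t))\,dt$, and this tends to $0$: choose $\sum_k\|h_{m_k}\|_{L^1}<\infty$, so that $\widetilde h=\sum_k h_{m_k}\in L^1$ satisfies $\int_0^T\widetilde h(B(t))\,dt<\infty$ a.s. off a polar set (Lemma~\ref{lemma:plorset}). Neither route you suggest closes the Riemann-sum step. A Littlewood--Paley decomposition of the maximal function does not pass through the power $|\cdot|^{p'}$, and you never justify that $M_1$ preserves $W^{s,p}$. The second-moment route has diagonal terms $(\Delta_i^n)^2\,\mathbb{E}_{x_0}|g_m(B(t_i^n))|^{2p'}$, which require $g_m\in L^{2p'}_{\rm loc}$ with $p'>1$. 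The hypotheses only give $g_m\in L^r_{\rm loc}$ for $r\le p^*$, where $1/p^*=1/p-s/d$, and $p^*<2$ when $p$ is close to $1$ and $d\ge 2$. The rate $\|p_t*g-g\|_{L^p}\lesssim t^{s/2}$ does not cure this.

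The missing idea is a \emph{first}-moment bound on the error $A_n(h)=\sum_i\int_{t_i^n}^{t_{i+1}^n}\bigl(h(B(t_i^n))-h(B(t))\bigr)dt$ that is summable in $n$. Let $\delta_n=|\pi_n|$, $K_R=\overline{B(0,R)}$ and $\Omega_R=\{B([0,T])\subset K_R\}$. The Markov property, \eqref{2eq:ptnix} and Assumption~\ref{ass:partitioncond} give
\[
\mathbb{E}_{x_0}\bigl[\mathbf{1}_{\Omega_R}|A_n(h)|\bigr]\le C_R\int_{K_R}\int_{K_R}|h(x)-h(y)|\,\nu(|x-x_0|)\,\frac{1}{\delta_n}\int_0^{\delta_n}p_s(y-x)\,ds\,dy\,dx.
\]
The geometric decay of Assumption~\ref{ass:partitioncond2}, through \eqref{2eq:partsum} and \eqref{2eq:psxs}, gives $\sum_n\delta_n^{-1}\int_0^{\delta_n}p_s(z)\,ds\le C|z|^{-d}$. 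Hence $\sum_n\mathbb{E}_{x_0}[\mathbf{1}_{\Omega_R}|A_n(h)|]\le C_R\int_{K_R}\int_{K_R}\frac{|h(x)-h(y)|}{|x-y|^d}\,\nu(|x-x_0|)\,dx\,dy$. This is finite off a polar set by Lemma~\ref{lemma:plorset}, and Tonelli gives $A_n(h)\to0$ a.s. The smoothness $s>0$ enters only through the finiteness of $\int_K\int_K|h(x)-h(y)|\,|x-y|^{-d}\,dx\,dy$ for the \emph{nonlinear} function $h=g_m^p$. That follows from three ingredients: $[M_1f]_{W^{s,p}}\le C_p[f]_{W^{s,p}}$ (by translation invariance and sublinearity of $M_1$); the inequality $|g^p(x)-g^p(y)|\le p\,(g^{p-1}(x)+g^{p-1}(y))|g(x)-g(y)|$; and H\"older with the locally integrable kernel $|x-y|^{-d+sp/(p-1)}$. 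Two smaller points. In Step~1 no embedding numerology is needed: $(2,p)$-polar sets are $(1,2)$-polar for every $p>1$ simply because $2p>2$ (Adams--Meyers). At $t=0$ the maximal-function bound sits at the deterministic point $x_0$; the paper therefore keeps the term $\int_{Q^n_0}|u_m(z)|\,|z-x_0|^{2-d}\,dz$ and shows it vanishes for $x_0$ outside a polar set.
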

 The proof is given in Appendix \ref{sec.Proof54}.

\subsection{Relation with the F\"{o}llmer-Protter formula}

Finally, we can recover a version of the F\"ollmer-Protter formula for functions of multidimensional Brownian motion \cite{follmerprotter2000}:
\begin{proposition}
Let $\pi=(\pi_n)_{n\geq 1}$ satisfy Assumptions~1 and~2 and let
\[
F\in W^{2+,p}(\mathbb{R}^d)
   \cap W^{2,1}(\mathbb{R}^d)
   \cap W_{\mathrm{loc}}^{1,2}(\mathbb{R}^d),
\qquad 1<p<\infty.
\]
Then, outside a polar set $E$ of starting points, the pathwise integral
constructed in Theorem~\ref{theorem:weakchangeofvar} agrees almost surely with the stochastic
It\^{o} integral, and
\[
\forall x\notin E,\quad 
\mathbb{P}_{x}\left( F(B(t))-F(B(0))
=
\int_0^t \nabla F(B(s))\,dB(s)
+
\frac{1}{2}[\nabla F(B),B]_{\pi}(t)\right)=1
\]
for every $t\in[0,T]$.\label{prop.FP}
\end{proposition}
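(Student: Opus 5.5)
Combine Theorem \ref{lemma:Sobinc} with Theorem \ref{theorem:weakchangeofvar}, then identify the pathwise integral with the Itô integral through the smooth approximations. Let $E_1$ be the polar set from Theorem \ref{lemma:Sobinc}. For $x\notin E_1$, $\mathbb{P}_x$-almost every path $B$ has finite quadratic variation $[B]_\pi(t)=tI$ along $\pi$ and satisfies \eqref{2eq:oscassump}. By Assumption~\ref{ass:partitioncond2} the mesh decays geometrically, so $\sum_n|\pi_n|<\infty$, and the classical Lévy argument with Borel--Cantelli applies. Theorem \ref{lemma:Sobinc} also gives $F\in W^2_{B,\pi}$ almost surely. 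Theorem \ref{theorem:weakchangeofvar} then yields, almost surely and for every $t$, the identity \eqref{eq:pathwise-Ito-approximation} with the pathwise integral $\int_0^t\nabla F(B)\,d^\pi B$. It remains to show that this integral equals the Itô integral $\int_0^t\nabla F(B(s))\,dB(s)$ almost surely.

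For this I use the explicit approximating sequence built in the proof of Theorem \ref{lemma:Sobinc}, presumably mollifications $F_m=\eta_m*F$. These are deterministic, so they are admissible for $\mathbb{P}_x$-a.e.\ path simultaneously. By Theorem \ref{theorem:weakchangeofvar}, the pathwise integral is the a.s.\ limit of $\int_0^t\nabla F_m(B)\,d^\pi B$. For each fixed $m$, $F_m\in C^2_b$, so the left Riemann sums along $\pi_n$ converge in probability to the Itô integral. Along our summable-mesh partitions Föllmer's theorem gives a.s.\ convergence, so the two limits agree a.s.: $\int_0^t\nabla F_m(B)\,d^\pi B=\int_0^t\nabla F_m(B(s))\,dB(s)$ a.s. The proof therefore reduces to showing $\int_0^t\nabla F_m(B)\,dB\to\int_0^t\nabla F(B)\,dB$ in probability under $\mathbb{P}_x$. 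A subsequence then converges a.s., and both limits coincide.

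By the Itô isometry (or a localized version using stopping times $\tau_R$, the exit time of the ball of radius $R$), it suffices to show
\[
\mathbb{E}_x\int_0^{t\wedge\tau_R}|\nabla F_m(B_s)-\nabla F(B_s)|^2\,ds
=\int_{B(0,R)}|\nabla F_m-\nabla F|^2(y)\,G^R_t(x,y)\,dy\to0,
\]
where $G^R_t(x,y)=\int_0^t p^R_s(x,y)\,ds$ is the killed occupation density. This is the main obstacle. Since $F\in W^{1,2}_{\rm loc}$, we have $\nabla F_m\to\nabla F$ in $L^2(B(0,R))$, but $G^R_t(x,\cdot)$ has a singularity $|x-y|^{2-d}$ at $y=x$ (logarithmic for $d=2$). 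I would use the Föllmer--Protter argument. Split the expectation into $s\in[0,\epsilon]$ and $s\in[\epsilon,t]$. On $[\epsilon,t]$ the heat kernel is bounded by $C\epsilon^{-d/2}$, so that part tends to $0$ by local $L^2$ convergence. On $[0,\epsilon]$, write $|\nabla F_m-\nabla F|^2\le 2|\nabla F_m|^2+2|\nabla F|^2$. Control $\mathbb{E}_x\int_0^\epsilon|\nabla F|^2(B_s)\,ds$ by finiteness of the Green potential $\int|\nabla F|^2(y)|x-y|^{2-d}\,dy$, and use the analogous bound for $\eta_m*|\nabla F|^2\ge|\nabla F_m|^2$ (Jensen). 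Finiteness holds outside a polar set $E_2$, since the Newtonian potential of the locally integrable function $|\nabla F|^2 1_{B(0,R)}$ is finite off a polar set; take $R$ ranging over $\mathbb{N}$. Continuity of Newtonian potentials of convolutions gives uniformity in $m$ up to controlled errors, or one can use the Markov property at a small time $\epsilon$ to push the starting point into a density. Letting $\epsilon\to0$ then $R\to\infty$ gives convergence in probability.

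Set $E=E_1\cup E_2\cup\{\text{exceptional sets for Lebesgue points}\}$, which is polar by countable subadditivity of capacity. For $x\notin E$, substituting the identification into \eqref{eq:pathwise-Ito-approximation} gives the formula for each fixed $t$ almost surely, as claimed.
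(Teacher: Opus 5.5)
Most of your argument is sound: the identification for fixed $m$ and the $x$-dependent subsequence both work. The gap is the step you call the main obstacle, and it is left open. For fixed $m$, you identify $\int_0^t\nabla F_m(B)\,d^\pi B$ with the It\^o integral via F\"ollmer's pathwise limit plus convergence in probability of the Riemann sums. This is a valid substitute for the paper's BDG/Borel--Cantelli estimate. An $x$-dependent subsequence is harmless because Theorem~\ref{theorem:weakchangeofvar} already gives the limit along the full sequence.

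What is missing is uniform-in-$m$ control on the initial window. After your split you need
\[
\sup_{m}\ \mathbb{E}_x\int_0^{\epsilon\wedge\tau_R}|\nabla F_m(B_s)|^2\,ds\longrightarrow 0\qquad(\epsilon\to 0).
\]
By Jensen the $m$-th term is at most $(\eta_m*U_\epsilon G)(x)$, with $G=|\nabla F|^2\mathbf{1}_{B(0,R+1)}$ and $U_\epsilon G=\int_0^\epsilon P_sG\,ds$. This is a mollified potential evaluated exactly at the point where the kernel is singular. Going from finiteness of the potential of $G$ at $x$ (your set $E_2$) to a bound on its averages over small balls around $x$ needs a structural property of the kernel that you never invoke. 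Neither suggestion supplies it:
\begin{itemize}
\item Each potential of $\eta_m*G$ is continuous, but with no modulus at $x$ uniform in $m$. So ``continuity of Newtonian potentials of convolutions'' gives nothing uniform.
\item The Markov property at time $\epsilon$ only concerns $[\epsilon,t]$, which you had already handled. It says nothing about the window $[0,\epsilon]$, where uniformity is needed.
\end{itemize}

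There are two ways to supply the missing ingredient.

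\textbf{Superharmonicity.} For $d\geq 3$ put $u_\rho(y)=\int_{B(x,2\rho)}|\nabla F(z)|^2|y-z|^{2-d}\,dz$. This is superharmonic, so $\fint_{B(x,r)}u_\rho\leq u_\rho(x)$. Take $1/m<\rho$ and use $|\nabla F_m|^2\leq\eta_m*|\nabla F|^2$ together with the bound $c_d|x-y|^{2-d}$ on the killed Green density. The contribution of $B(x,\rho)$ is then at most $C\bigl((\eta_m*u_\rho)(x)+u_\rho(x)\bigr)\leq C'u_\rho(x)$. This tends to $0$ as $\rho\to0$, uniformly in $m$. The complement of $B(x,\rho)$ is controlled by $L^2_{\rm loc}$ convergence. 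For $d=2$, use the logarithmic kernel instead.

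\textbf{The paper's route,} which needs no uniformity at all. Before fixing $x$, pass to a deterministic subsequence with $\sum_m\|\nabla F_m-\nabla F\|_{L^2(B_R)}^2<\infty$ for every $R$. Set $H_R=\mathbf{1}_{B_R}\sum_m|\nabla F_m-\nabla F|^2\in L^1$, and enlarge the polar set using Lemma~\ref{lemma:plorset} applied to each $H_R$. BDG and Tonelli then give
\[
\sum_m\mathbb{E}_x\sup_{t\leq T}\Bigl|\int_0^{t\wedge\tau_R}(\nabla F_m-\nabla F)(B_s)\,dB_s\Bigr|^2\leq C\int H_R\,\nu(|\cdot-x|)<\infty,
\]
which yields almost sure convergence directly. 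The cost is an exceptional set that depends on the approximating sequence. This is harmless because that sequence is deterministic.
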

For $d=1$ we obtain a pathwise derivation of the F\"ollmer-Protter-Shiryaev formula \cite{FPS1995}.
\begin{proof}
Let $(F_m)_{m\geq 1}$ be the deterministic smooth approximating sequence obtained in the proof of Theorem~\ref{lemma:Sobinc}. Thus there exists a polar set $E_0\subset\mathbb{R}^d$ such that, for every $x_0\notin E_0$,
$$
F_m(B(t))\longrightarrow F(B(t)),
\qquad t\in[0,T],
$$
and
$$
\left\|\nabla^2F_m-\nabla^2F\right\|_{W^0_{B,\pi}}
\longrightarrow 0
$$
$\mathbb{P}_{x_0}$-almost surely.
Since $F_m$ are standard mollifications of $F\in W^{1,2}_{\mathrm{loc}}(\mathbb{R}^d)$,
$$
\nabla F_m\longrightarrow\nabla F
\quad\text{in }L^2_{\mathrm{loc}}(\mathbb{R}^d).
$$
Passing to a further deterministic subsequence, and relabelling it again by $(F_m)_{m\geq 1}$, we may assume that, for every integer $R\geq 1$,
$$
\sum_{m=1}^{\infty}
\left\|\nabla F_m-\nabla F\right\|_{L^2(B_R)}^2
<\infty,
$$
where $B_R:=B(0,R)$. Indeed, the subsequence can be chosen inductively so that
$$
\left\|\nabla F_m-\nabla F\right\|_{L^2(B_j)}^2
\leq 2^{-m},
\qquad 1\leq j\leq m.
$$
Passing to this subsequence does not affect the two conclusions obtained from Theorem \ref{lemma:Sobinc}.

Choose a finite-valued Borel representative of $\nabla F$. For $R\geq 1$, define
$$
H_R(x):=
\mathbf{1}_{B_R}(x)
\sum_{m=1}^{\infty}
\left|\nabla F_m(x)-\nabla F(x)\right|^2.
$$
Then $H_R\in L^1(\mathbb{R}^d)$. By Lemma~B.1, there exists a polar set $E_R^1$ such that
$$
\int_{\mathbb{R}^d}
H_R(x)\nu(|x-x_0|)\,dx
<\infty
$$
for every $x_0\notin E_R^1$. Similarly, since $\nabla F\in L^2_{\mathrm{loc}}(\mathbb{R}^d)$, the function
$$
G_R(x):=
\mathbf{1}_{B_R}(x)|\nabla F(x)|^2
$$
belongs to $L^1(\mathbb{R}^d)$. Hence there exists a polar set $E_R^2$ such that
$$
\int_{\mathbb{R}^d}
G_R(x)\nu(|x-x_0|)\,dx
<\infty
$$
for every $x_0\notin E_R^2$.
Let
$$
E_1:=
\bigcup_{R=1}^{\infty}
\left(E_R^1\cup E_R^2\right).
$$
Then $E_1$ is polar. Fix $x_0\notin E_0\cup E_1$ and define
$$
\tau_R:=
\inf\{t\geq 0:|B(t)|\geq R\}.
$$
Using the Brownian transition density and estimate~(46), we obtain
$$
\begin{aligned}
E_{x_0}\left[
\int_0^{T\wedge\tau_R}
|\nabla F(B(s))|^2\,ds
\right]
&\leq
\int_{B_R}
|\nabla F(x)|^2
\left(
\int_0^T p_s(x-x_0)\,ds
\right)dx
\\
&\leq
C_{R,x_0}
\int_{B_R}
|\nabla F(x)|^2
\nu(|x-x_0|)\,dx
<\infty.
\end{aligned}
$$
Consequently,
$$
\int_0^{T\wedge\tau_R}
|\nabla F(B(s))|^2\,ds
<\infty
$$
$\mathbb{P}_{x_0}$-almost surely. Therefore the stopped stochastic integral
$$
J^R(t):=
\int_0^{t\wedge\tau_R}
\nabla F(B(s))\,dB(s)
$$
is well defined. The family $(J^R)_{R\geq 1}$ is consistent under stopping, and since $\tau_R\uparrow\infty$ almost surely, it defines the local It\^o integral
$$
J(t):=
\int_0^t\nabla F(B(s))\,dB(s),
\qquad t\in[0,T].
$$

We next identify the pathwise integral for every smooth approximant. For $s\in[t_i^n,t_{i+1}^n)$, set
$ \eta_n(s):=t_i^n,$
and define
$$
S_n^m(t):=
\sum_{i=0}^{m(n)-1}
\nabla F_m(B(t_i^n))\cdot
\left(
B(t_{i+1}^n\wedge t)-B(t_i^n\wedge t)
\right).
$$
Since the integrand is a predictable step process,
$$
S_n^m(t)=
\int_0^t
\nabla F_m(B(\eta_n(s)))\,dB(s).
$$
Let
$$
J_m(t):=
\int_0^t\nabla F_m(B(s))\,dB(s).
$$
Because $F_m\in C_b^2(\mathbb{R}^d)$, the function $\nabla F_m$ is globally Lipschitz. The Burkholder--Davis--Gundy inequality gives
$$
\begin{aligned}
E_{x_0}\left[
\sup_{0\leq t\leq T}
|S_n^m(t)-J_m(t)|^2
\right]
&\leq
C
E_{x_0}\left[
\int_0^T
\left|
\nabla F_m(B(\eta_n(s)))
-
\nabla F_m(B(s))
\right|^2 ds
\right]
\\
&\leq
C\|\nabla^2F_m\|_{\infty}^2
E_{x_0}\left[
\int_0^T
|B(\eta_n(s))-B(s)|^2\,ds
\right]
\\
&\leq
C_{d,T}\|\nabla^2F_m\|_{\infty}^2|\pi_n|.
\end{aligned}
$$
Assumption~\ref{ass:partitioncond2} implies
$$
\sum_{n=1}^{\infty}|\pi_n|<\infty.
$$
Hence, for each fixed $m$,
$$
\sum_{n=1}^{\infty}
E_{x_0}\left[
\sup_{0\leq t\leq T}
|S_n^m(t)-J_m(t)|^2
\right]
<\infty.
$$
By Tonelli's theorem,
$$
\sum_{n=1}^{\infty}
\sup_{0\leq t\leq T}
|S_n^m(t)-J_m(t)|^2
<\infty
$$
almost surely. Therefore,
$$
\sup_{0\leq t\leq T}
|S_n^m(t)-J_m(t)|
\longrightarrow 0
$$
almost surely. Since $m$ ranges over a countable set, this convergence holds simultaneously for all $m$ outside one null set. It follows that
$$
\int_0^t
\nabla F_m(B(s))\,\mathrm{d}^{\pi}B(s)
=
\int_0^t
\nabla F_m(B(s))\,dB(s)
=
J_m(t)
$$
for every $m$ and every $t\in[0,T]$, almost surely.
It remains to prove that $J_m$ converges almost surely and uniformly to $J$. For fixed $R\geq 1$, define
$$
D_m^R(t):=
\int_0^{t\wedge\tau_R}
\left(
\nabla F_m(B(s))-\nabla F(B(s))
\right)dB(s).
$$
By the Burkholder--Davis--Gundy inequality and Tonelli's theorem,
$$
\begin{aligned}
\sum_{m=1}^{\infty}
E_{x_0}\left[
\sup_{0\leq t\leq T}
|D_m^R(t)|^2
\right]
&\leq
C
E_{x_0}\left[
\int_0^{T\wedge\tau_R}
\sum_{m=1}^{\infty}
\left|
\nabla F_m(B(s))-\nabla F(B(s))
\right|^2 ds
\right]
\\
&\leq
C
\int_{B_R}
H_R(x)
\left(
\int_0^T p_s(x-x_0)\,ds
\right)dx
\\
&\leq
C_{R,x_0}
\int_{B_R}
H_R(x)\nu(|x-x_0|)\,dx <\infty.
\end{aligned}
$$
It follows again from Tonelli's theorem that
$$
\sum_{m=1}^{\infty}
\sup_{0\leq t\leq T}
|D_m^R(t)|^2
<\infty
$$
almost surely. Hence
$$
\sup_{0\leq t\leq T}
|D_m^R(t)|
\longrightarrow 0
$$
almost surely.

Taking the countable intersection over $R\geq 1$, we may assume that the preceding convergence holds for every integer $R$. Since Brownian paths are bounded on $[0,T]$, almost every path satisfies $\tau_R>T$ for some integer $R$. For such an $R$,
$$
D_m^R(t)=J_m(t)-J(t),
\qquad t\in[0,T].
$$
Therefore,
$$
\sup_{0\leq t\leq T}
|J_m(t)-J(t)|
\longrightarrow 0
$$
almost surely.
By Theorem~\ref{theorem:weakchangeofvar},
$$
\int_0^t
\nabla F(B(s))\,\mathrm{d}^{\pi}B(s)
=
\lim_{m\to\infty}
\int_0^t
\nabla F_m(B(s))\,\mathrm{d}^{\pi}B(s).
$$
Using the identification for the smooth approximants and the almost-sure uniform convergence of $J_m$ to $J$, we obtain
$$
\begin{aligned}
\int_0^t
\nabla F(B(s))\,\mathrm{d}^{\pi}B(s)
&=
\lim_{m\to\infty}
\int_0^t
\nabla F_m(B(s))\,dB(s)
\\
&=
\int_0^t
\nabla F(B(s))\,dB(s)
\end{aligned}
$$
for every $t\in[0,T]$, almost surely.
Finally, Theorem~\ref{theorem:weakchangeofvar} gives
$$
F(B(t))-F(B(0))
=
\int_0^t
\nabla F(B(s))\,\mathrm{d}^{\pi}B(s)
+
\frac{1}{2}
[\nabla F(B),B]_{\pi}(t).
$$
Substituting the preceding identity yields
$$
F(B(t))-F(B(0))
=
\int_0^t
\nabla F(B(s))\,dB(s)
+
\frac{1}{2}
[\nabla F(B),B]_{\pi}(t)
$$
for every $t\in[0,T]$, almost surely.

The exceptional set
$$
E:=E_0\cup E_1
$$
is polar. This proves the proposition.
\end{proof}

\begin{remark}{\em
The additional assumption
$F\in W_{\mathrm{loc}}^{1,2}(\mathbb{R}^d)$
is not stronger than the regularity condition used in the
F\"ollmer-Protter formula \cite{follmerprotter2000}; it is essentially the same Sobolev regularity
assumption needed to define the classical stochastic It\^o integral
$\int_0^t \nabla F(B(s)),dB(s)$.
In dimension one, this corresponds to requiring that $F$ be locally
absolutely continuous with $F'\in L_{\mathrm{loc}}^2(\mathbb{R})$.

The stronger regularity assumption in the present result is instead
$$
F\in W^{2+,p}(\mathbb{R}^d)\cap W^{2,1}(\mathbb{R}^d),
\qquad
W^{2+,p}(\mathbb{R}^d)
=
\bigcup_{s\in(0,1)}W^{2+s,p}(\mathbb{R}^d),
$$
which is required for the pathwise construction. This assumption does
not automatically imply
$F\in W_{\mathrm{loc}}^{1,2}(\mathbb{R}^d)$
for every $p>1$ and every dimension $d$. A sufficient Sobolev embedding
condition is
$$
1+s>d\left(\frac{1}{p}-\frac{1}{2}\right)
$$
for some $s\in(0,1)$ such that
$F\in W^{2+s,p}(\mathbb{R}^d)$. In particular, the implication holds
automatically when $p\geq2$.}
\end{remark}

\begin{appendices}
    \section{$L^q$ estimates for quadratic Brownian sums}
\label{sec:appA}
The following estimate is the only property of the discrete quadratic variation
of Brownian motion used in the proof of Theorem~\ref{lemma:Sobinc}.
For a continuous path $\omega\colon[0,T]\to\mathbb{R}^d$, define
\begin{equation}
\rho_{n,\omega}(t)
:=
\sum_{i=0}^{m(n)-1}
\frac{\left|\omega(t_{i+1}^n)-\omega(t_i^n)\right|^2}
     {t_{i+1}^n-t_i^n}
\mathbf{1}_{[t_i^n,t_{i+1}^n)}(t).
\label{eq:discrete-qv-density}
\end{equation}

\begin{lemma}
\label{lemma:qvar}
Let $(\Omega,B,\mathbb{P}_x)$ be the canonical Wiener space as in Section~\ref{sec:Bessel}, 
$q>1$, and $\pi=(\pi_n)_{n\geq1}$ a sequence of partitions of
$[0,T]$ such that
$$
\sum_{n=1}^{\infty}|\pi_n|<\infty.
$$
Then
$$
\|\rho_{n,B}\|_{L^q([0,T])}^q
\longrightarrow
c_{q,d}T
\qquad \mathbb{P}_x\text{-almost surely},
$$
where
$$
c_{q,d}:=\mathbb{E}|Z|^{2q},
\qquad Z\sim N(0,I_d).
$$
In particular,
$$
\sup_{n\geq1}\|\rho_{n,B}\|_{L^q([0,T])}<\infty
\qquad \mathbb{P}_x\text{-almost surely}.
$$
\end{lemma}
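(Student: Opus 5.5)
The plan is a law-of-large-numbers argument for the i.i.d.-type increments, followed by Borel--Cantelli using the summability of the mesh. Set $\Delta_i^n:=t_{i+1}^n-t_i^n$. By the Markov property and Brownian scaling, under $\mathbb{P}_x$ the variables
\[
Z_i^n:=\frac{B(t_{i+1}^n)-B(t_i^n)}{\sqrt{\Delta_i^n}},\qquad 0\le i\le m(n)-1,
\]
are i.i.d.\ $N(0,I_d)$ for fixed $n$; the starting point $x$ plays no role. By construction,
\[
\|\rho_{n,B}\|_{L^q([0,T])}^q=\sum_{i}\Delta_i^n\,|Z_i^n|^{2q}=:S_n ,
\]
which is a weighted sum of i.i.d.\ copies of $|Z|^{2q}$ with weights summing to $T$. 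Hence $\mathbb{E}_x S_n=c_{q,d}T$ exactly.

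Next I would control the fluctuations with a second-moment bound. Let $\sigma^2:=\mathrm{Var}(|Z|^{2q})$, which is finite because Gaussian variables have moments of all orders. By independence,
\[
\mathrm{Var}(S_n)=\sigma^2\sum_i(\Delta_i^n)^2\le \sigma^2\,|\pi_n|\sum_i\Delta_i^n=\sigma^2T|\pi_n|.
\]
Chebyshev's inequality then gives, for every $\varepsilon>0$,
\[
\mathbb{P}_x\bigl(|S_n-c_{q,d}T|>\varepsilon\bigr)\le \frac{\sigma^2T|\pi_n|}{\varepsilon^2}.
\]
This bound is summable in $n$ by the hypothesis $\sum_n|\pi_n|<\infty$. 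Borel--Cantelli yields $\limsup_n|S_n-c_{q,d}T|\le\varepsilon$ almost surely, and taking $\varepsilon=1/k$ over a countable family gives $S_n\to c_{q,d}T$ $\mathbb{P}_x$-almost surely. A cleaner equivalent route is to note directly that $\mathbb{E}_x\sum_n(S_n-c_{q,d}T)^2<\infty$, so the series converges almost surely and its terms tend to zero.

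The "in particular" statement follows at once. A convergent sequence of finite numbers is bounded, and each individual $\|\rho_{n,B}\|_{L^q}$ is finite almost surely, since it is a finite sum of finite terms. Hence $\sup_n\|\rho_{n,B}\|_{L^q}<\infty$ almost surely.

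I do not expect a real obstacle. The only points to check are that the normalized increments are exactly i.i.d.\ standard Gaussian, which needs no condition on the partition beyond deterministic partition points, and that the variance identity uses independence across $i$ only within a fixed $n$. No independence between different $n$ is needed, because Borel--Cantelli requires only summability of the probabilities.
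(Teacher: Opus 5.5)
Your proposal is correct and follows essentially the same route as the paper: you write $\|\rho_{n,B}\|_{L^q}^q=\sum_i|Z_i^n|^{2q}\Delta_i^n$ with i.i.d.\ standard Gaussian $Z_i^n$, bound the variance by $\mathrm{Var}(|Z|^{2q})\,T|\pi_n|$, and conclude with Chebyshev and Borel--Cantelli using $\sum_n|\pi_n|<\infty$. Your explicit countable choice $\varepsilon=1/k$ and the alternative route via $\mathbb{E}_x\sum_n(S_n-c_{q,d}T)^2<\infty$ simply spell out the final step slightly more carefully than the paper does.
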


\begin{proof}
Write
$$
\Delta_i^n:=t_{i+1}^n-t_i^n,
\qquad
Z_i^n:=\frac{B(t_{i+1}^n)-B(t_i^n)}{\sqrt{\Delta_i^n}}.
$$
For each $n$, the random variables $(Z_i^n)_{i=0}^{m(n)-1}$ are
independent and have distribution $N(0,I_d)$. Therefore,
$$
\|\rho_{n,B}\|_{L^q([0,T])}^q
=
\sum_{i=0}^{m(n)-1}|Z_i^n|^{2q}\Delta_i^n.
$$
Let
$b_{q,d}:=\operatorname{Var}(|Z|^{2q}),
\qquad Z\sim N(0,I_d).$
Since $\sum_i\Delta_i^n=T$, we have
$$
\mathbb{E}_x\!\left[\|\rho_{n,B}\|_{L^q([0,T])}^q\right]
=c_{q,d}T,
$$
and, by independence,
\begin{align*}
\operatorname{Var}_x\!\left(
\|\rho_{n,B}\|_{L^q([0,T])}^q
\right)
&=
 b_{q,d}\sum_{i=0}^{m(n)-1}(\Delta_i^n)^2 \leq b_{q,d}T|\pi_n|.
\end{align*}
Hence, for every $\varepsilon>0$, Chebyshev's inequality gives
$$
\sum_{n=1}^{\infty}
\mathbb{P}_x\!\left(
\left|
\|\rho_{n,B}\|_{L^q([0,T])}^q-c_{q,d}T
\right|>\varepsilon
\right)
\leq
\frac{b_{q,d}T}{\varepsilon^2}
\sum_{n=1}^{\infty}|\pi_n|
<\infty.
$$
The Borel--Cantelli lemma yields the asserted almost-sure convergence.
The uniform $L^q$ bound follows immediately.
\end{proof}

\section{Technical lemmas}
\label{sec:appB}
 
Let $(\Omega, B, \mathbb{P}_x)$ be the canonical Wiener space defined in Section \ref{sec:Bessel}. We take a sequence of compact sets $K_l\subset \Rr^d,\, l\geq 1$ such that for any starting point $x_0$ outside a polar set $E$
\eq{\label{2eq:Kl}
\lim_{l\to\infty}P( B(t)\in K_l,\quad \forall t\in [0, T]) =1.
}
We consider partitions $\pi:=\{\pi_n\}$  of $[0, T]$, satisfying  Assumptions \ref{ass:partitioncond} and \ref{ass:partitioncond2}.
We note that under the latter condition, we have
\eq{\label{ass:partitioncond3}
\sum_{n\geq 1} |\pi_n| <+\infty,
}
\eq{\label{2eq:partsum}
\frac 1 {|\pi_1|}\,+\,\frac 1 {|\pi_2|}\,+\ldots\,+ \frac 1 {|\pi_n|}\leq M_3 \frac 1 {|\pi_n|}.
}
Indeed, under Assumption \ref{ass:partitioncond2}, we have 
$$|\pi_k|\leq \frac 1 {M_2} |\pi_{k-1}|\leq \ldots \leq  \frac 1 {M_2^{k-1}} |\pi_1|$$
thus 
\[
\sum_{n\geq 1} |\pi_n| \leq |\pi_1| \sum_{n\geq 1}  \frac 1 {M_2^{n-1}}<+\infty.
\]
similarly 
$$\frac 1 {|\pi_k|}\leq \frac 1 {M_2} \frac 1 {|\pi_{k+1}|}\leq \ldots \leq  \frac 1 {M_2^{n-k}} |\pi_n|,$$
and
\[
\frac 1 {|\pi_1|}\,+\,\frac 1 {|\pi_2|}\,+\ldots\,+ \frac 1 {|\pi_n|}\leq \left( \frac 1 {M_2^{n-1}}\,+\,\frac 1 {M_2^{n-2}}\,+ \dots+\, 1\right)\frac 1 {|\pi_n|} \leq \frac {M_2}{M_2 -1} \frac 1 {|\pi_n|}.
\]
Next, let $p_{s}(x)$ be the Gaussian distribution with mean $0$ and variance $s$:
\[
p_s(x):= \frac 1 {(2\pi s)^{\frac d 2}} e^{-\frac {|x|^2}{2 s}}.
\]
We will use the following estimates (see e.g. \cite{follmerprotter2000})
\eq{\label{2eq:psx}
\int_0^T p_{_s}(x) \d s \leq C(R) \nu (|x|), \forall |x|\leq R
}
where 
\begin{equation}\label{eq.nu}
\nu(r)=
\begin{cases}
1, & d=1,\\
1+\log^+(1/r), & d=2,\qquad \\
r^{2-d}, & d\ge 3.
\end{cases}
\end{equation}
Similarly, we have 
\eq{\label{2eq:psxs}
\int_0^T \frac{p_{_s}(x) }{s}\d s \leq C\, \frac 1 {|x|^{d}}.
}
From the assumptions on the partitions, we have
$$
\sup_{n, i} \frac{t^n_{i+1}}{t^n_{i}}\leq 1+ \sup_{n, i} \frac{t^n_{i+1}- t^n_i}{t^n_{i}- t^n_{i-1}}<+\infty
$$
thus as in \cite{follmerprotter2000}
\eq{\label{2eq:ptnix}
\sum_{i=0}^{m(n)-1} p_{_{t^n_i}}(x)(t^n_{i+1}- t^n_i) \leq C(R) \nu (|x|), \forall |x|\leq R.
}
We prove two auxiliary lemmas. The first lemma implies the existence of singular integrals with respect to the kernel $\nu$ outside polar sets of Brownian measure.

\begin{lemma}
\label{lemma:plorset}
Let $h\in L^1(\mathbb R^d)$. There  exists a polar set
$E\subset\mathbb R^d$ such that
$$
\int_{\mathbb R^d}|h(x)|\ \nu(|x-x_0|)\,dx<\infty
$$
for every $x_0\notin E$, where $\nu$ is defined in \eqref{eq.nu}.
\end{lemma}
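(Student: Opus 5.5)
We need to prove: for $h\in L^1$, the potential $U(x_0):=\int|h(x)|\nu(|x-x_0|)\,dx$ is finite outside a polar set.

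The plan is to compare $\nu$ with a Bessel kernel and use the capacity characterization of polar sets recalled in Section~\ref{sec:Bessel}. In dimension $d=1$ there is nothing to prove, since $\nu\equiv 1$ and $U(x_0)=\|h\|_{L^1}<\infty$ everywhere; we take $E=\emptyset$. For $d\ge 2$, first split $U$ into a near part over $|x-x_0|\le 1$ and a far part. On $|x-x_0|>1$ we have $\nu\le 1$ (with $\nu\le 1$ for $d\ge3$ and $\nu=1$ for $d=2$), so the far part is bounded by $\|h\|_{L^1}$. Hence it suffices to show that
\[
U_1(x_0):=\int_{|x-x_0|\le 1}|h(x)|\,\nu(|x-x_0|)\,dx
\]
is finite off a polar set. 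By the asymptotics \eqref{2eq:Bessassymp}, $g_2(x)\ge c\,\nu(|x|)$ for $|x|\le 1$, both for $d\ge3$ ($g_2\sim |x|^{2-d}$) and for $d=2$ ($g_2\sim \log(1/|x|)$, comparable to $1+\log^+$ near $0$; on $1/2\le|x|\le1$ use positivity and continuity of $g_2$ away from the origin). Thus $U_1\le C\,(g_2\ast|h|)(x_0)$, and the claim reduces to showing that $\{g_2\ast|h|=\infty\}$ is polar.

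For this reduction I would use the Newtonian/Green-potential viewpoint rather than the Bessel capacity, since $h$ is only in $L^1$ and the capacities $B_{\alpha,p}$ with $p>1$ are not directly applicable. Using \eqref{2eq:psx} in reverse (the $0$-order resolvent density $\int_0^T p_s\,ds$ dominates $c\,\nu$ on $|x|\le 1$, by a direct computation with $s\in[|x|^2,2|x|^2]$), it suffices to show that
\[
V(x_0):=\int|h(x)|\int_0^T p_s(x-x_0)\,ds\,dx=\mathbb{E}_{x_0}\!\left[\int_0^T|h|(B_s)\,ds\right]
\]
is finite outside a polar set. The function $V$ is a potential of the finite measure $|h|\,dx$ for the killed process (equivalently $u(x_0)=\int G(x_0,x)|h(x)|\,dx$ with the Green kernel of $\tfrac12\Delta-\lambda$, which dominates $V$ up to $e^{\lambda T}$). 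Such a potential is excessive, and the classical result that $\{u=\infty\}$ is polar for the potential of a finite measure (e.g.\ via Fukushima~\cite{fukushima1993}, or: $\{u=\infty\}$ is a finely closed set of capacity zero since $\mathrm{Cap}(\{u>k\})\le \|h\|_{L^1}/k$ for the $1$-Green capacity, by testing the equilibrium measure against $u$) finishes the argument. Concretely I would use the estimate $\mathrm{Cap}_1(\{u>k\})\le k^{-1}\int u\,d\mu_k\le k^{-1}\int|h|\,dx$, where $\mu_k$ is the equilibrium measure of $\{u>k\}$ with $G\mu_k\le1$, and then the symmetry of $G$ together with the fact that $\{u=\infty\}\subset\bigcap_k\{u>k\}$ gives capacity zero, hence polarity via the equivalence with $(1,2)$-polarity quoted in Section~\ref{sec:Bessel}.

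I expect the main obstacle to be this capacity step for an $L^1$ density: one needs lower semicontinuity of $u$, so that $\{u>k\}$ is open and the equilibrium measure exists, together with the correct normalization of the $1$-capacity consistent with Definition~\ref{chp2def:polar}. I would cite the standard theory of Fukushima~\cite{fukushima1993} (a set of zero $1$-capacity is exceptional, hence polar for Brownian motion) rather than redo it. The comparisons of $\nu$ with $g_2$ and with the resolvent kernel are routine.
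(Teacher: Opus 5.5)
Your proof is correct. Its first half coincides with the paper's: the case $d=1$ is trivial, and for $d\geq 2$ the bound $\nu(|z|)\le C(1+g_2(z))$ (your near/far splitting) reduces everything to showing that $\{g_2*|h|=\infty\}$ is polar.

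The two routes diverge at that point. The paper stays with Bessel capacities but takes the exponent $p=1$, which Definition~\ref{def:besscap} permits. Since $g_2*(|h|/N)>1$ on $\{g_2*|h|>N\}$, the definition immediately gives $B_{2,1}(\{g_2*|h|>N\})\le \|h\|_{L^1}/N$. So the infinity set is $(2,1)$-polar, and the Adams--Meyers inclusion theorem then upgrades $(2,1)$-polar to $(1,2)$-polar, i.e.\ polar for Brownian motion.

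Your remark that Bessel capacities do not apply to an $L^1$ density therefore holds only for $p>1$. With $p=1$ the Chebyshev step is a one-liner, and the potential-theoretic content is outsourced to the comparison between the $(2,1)$ and $(1,2)$ exceptional classes. For $d\ge 3$ that comparison is essentially the classical fact that $\mathcal H^{d-2}$-null sets have zero Newtonian capacity. Your route instead works directly with the $1$-capacity attached to Brownian motion, bounding its superlevel sets by duality with equilibrium measures and the symmetry of the resolvent kernel. This needs no comparison theorem between different capacities. The price is the standard Dirichlet-form machinery: lower semicontinuity of $u$, equilibrium measures, capacitability, and $\mathrm{Cap}_1$-null $\Leftrightarrow$ polar.

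Two minor points. First, the detour through $V$ and \eqref{2eq:psx} is unnecessary. With the paper's Fourier normalization, the $\tfrac12$-resolvent kernel of Brownian motion is exactly $2g_2$, so $g_2*|h|$ already equals $\tfrac12 u$ with $\lambda=\tfrac12$. Second, the Chebyshev bound should be run on compact sets $K\subset\{u>k\}$, whose equilibrium measure is carried by $K$, where $u>k$. You then conclude by inner regularity of the capacity on the open set $\{u>k\}$. Working with the equilibrium measure of the open set itself does not suffice, because it charges $\partial\{u>k\}$, where only $u\le k$ is guaranteed.
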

\begin{proof}
Let $g_2$ denote the Bessel kernel of order $2$. 
For $d=1$, the assertion follows immediately from $\nu\equiv 1$.
We may therefore assume $d\geq 2$.
From the behaviour of $g_2$ at the origin given in~\eqref{2eq:Bessassymp}, and from the positivity and continuity of $g_2$ away from the origin, there exists a constant $C_d>0$ such that
$$
\nu(|z|)
\leq
C_d\bigl(1+g_2(z)\bigr),
\qquad z\in\mathbb{R}^d\setminus\{0\}.
$$
Indeed, near the origin this follows from the asymptotic behaviour of $g_2$; for $|z|\geq 1$, the function $\nu(|z|)$ is bounded.
Consequently,
\begin{eqnarray*}
\int_{\mathbb{R}^d}
|h(x)|\nu(|x-x_0|)\ dx
&\leq
C_d\int_{\mathbb{R}^d}|h(x)| dx +
C_d\int_{\mathbb{R}^d}
|h(x)|g_2(x-x_0)\,dx \\
&=
C_d|h|_{L^1(\mathbb{R}^d)}
+
C_d\bigl(g_2*|h|\bigr)(x_0).
\end{eqnarray*}
It therefore remains to show that the Bessel potential $g_2*|h|$ is finite outside a $(2,1)$-polar set.
For $N\geq 1$, define
$$
E_N:=
\{
x_0\in\mathbb{R}^d:
\bigl(g_2* |h|\bigr)(x_0)>N
\}.
$$
On $E_N$ one has
$$
g_2*\left(\frac{|h|}{N}\right)>1.
$$
Hence, by the definition of the Bessel $(2,1)$-capacity,
$$ B_{2,1}(E_N)
\leq
\frac{1}{N}\|h\|_{L^1(\mathbb R^d)}.$$
Now let
$$
E:=
\{
x_0\in\mathbb{R}^d:
\bigl(g_2*|h|\bigr)(x_0)=+\infty
\}.
$$
Since $E\subset E_N$ for every $N\geq 1$, monotonicity of the Bessel capacity gives
$$
B_{2,1}(E)
\leq
B_{2,1}(E_N)
\leq
\frac{1}{N}\|h\|_{L^1(\mathbb{R}^d)}.
$$
Letting $N\to\infty$, we obtain
$ B_{2,1}(E)=0.$
Thus $E$ is $(2,1)$-polar. By  ~\cite[Theorem~1]{AM}, every $(2,1)$-polar set is $(1,2)$-polar. Since $(1,2)$-polar sets are polar for Brownian motion, $E$ is polar in the sense of Definition~2.8.

For every $x_0\notin E$, one has
$
\bigl(g_2*|h|\bigr)(x_0)<\infty,
$
and therefore
$$
\int_{\mathbb{R}^d}
|h(x)|\nu(|x-x_0|)\,dx<\infty.
$$
This proves the result.
\end{proof}


The next lemma provides conditions for functions to belong to $\mathcal{W}^0_{\omega, \pi}$.
\begin{lemma} \label{lem:B2}
\label{lemma:Lpinc}
Let $(\Omega, B, \mathbb{P}_x)$ be the canonical Wiener space defined as in Section \ref{sec:Bessel}
  and $\pi=(\pi_n)_{n\geq1}$ be a sequence of partitions of $[0,T]$ satisfying Assumptions \eqref{ass:partitioncond}, \eqref{ass:partitioncond2}.  Let $(h_m)_{m\geq1}\subset L^1(\mathbb{R}^d)$ be a sequence of nonnegative finite-valued Borel functions  such that:
\begin{enumerate}
\item $\|h_m\|_{L^1(\mathbb{R}^d)}\to 0$ as $m\to\infty$;
\item for every compact set $K\subset\mathbb{R}^d$,
$$
\int_K\int_K \frac{|h_m(x)-h_m(y)|}{|x-y|^d}\,dx\,dy<\infty.
$$
\end{enumerate}
Then there exists a polar set $E\subset\mathbb{R}^d$ and a subsequence $(h_{m_k})_{k\geq1}$ such that, for every $x_0\notin E$,
$$
\lim_{k\to\infty}\ \limsup_{n\to\infty}\|h_{m_k}\|_{n,1,B}=0,
\qquad \mathbb{P}_{x_0}\text{-a.s.},
$$
where
$$
\|h\|_{n,1,B}
:=
\int_0^T
\sum_{i=0}^{m(n)-1}
\bigl(|h(B(t_i^n))|+|h(B(t_{i+1}^n)|)\bigr)(t_{i+1}^n-t_{i}^n)
$$
\end{lemma}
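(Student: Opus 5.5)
The plan is to compare, for each fixed $m$, the discrete sums $\|h_m\|_{n,1,B}$ with the continuous occupation integral $2\int_0^T h_m(B(s))\,ds$. We show that their difference is summable in $n$ almost surely. Then we use the $L^1$ smallness of $h_m$ together with Lemma~\ref{lemma:plorset} to make the continuous integrals small along a subsequence.

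\textbf{Step 1: localization.} We localize using the compacts $K_l$ from \eqref{2eq:Kl}. On the event $\{B(t)\in K_l\ \forall t\le T\}$ we may replace $h_m$ by $h_m\mathbf 1_{K_l}$. Moreover $\nu(|x-x_0|)$ is controlled on bounded sets by \eqref{2eq:psx} and \eqref{2eq:ptnix}. Since $l$ ranges over a countable set and the probabilities in \eqref{2eq:Kl} tend to $1$, it suffices to prove the claim on each such event, with a polar set that depends on $l$; the countable union of these polar sets is polar. The right-endpoint terms $h(B(t^n_{i+1}))$ are treated exactly like the left-endpoint ones, so below we only discuss $\sum_i h(B(t^n_i))\Delta^n_i$.

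\textbf{Step 2: discrete versus continuous (the main obstacle).} For fixed $m$ we write
\[
D_n:=\sum_i\int_{t^n_i}^{t^n_{i+1}}\bigl|h_m(B(t^n_i))-h_m(B(s))\bigr|\,ds ,
\]
and aim to show $\sum_n E_{x_0}[D_n]<\infty$. By the Markov property,
\[
E_{x_0}[D_n]\le\int\!\!\int_{K\times K}|h_m(x)-h_m(y)|\sum_i p_{t^n_i}(x-x_0)\int_0^{\Delta^n_i}p_u(y-x)\,du\,dx\,dy .
\]
We bound $\int_0^{\Delta^n_i}\le\int_0^{|\pi_n|}$. By Assumption~\ref{ass:partitioncond} and \eqref{2eq:ptnix},
\[
\sum_i p_{t^n_i}(x-x_0)\le \frac{C}{|\underline{\pi}_n|}\,\nu(|x-x_0|)\le \frac{CM_1}{|\pi_n|}\,\nu(|x-x_0|).
\]
Summing over $n$ then requires the estimate
\[
\sum_n |\pi_n|^{-1}\mathbf 1_{\{u\le|\pi_n|\}}\le C/u ,
\]
which follows from the geometric decay in Assumption~\ref{ass:partitioncond2}: it is the same computation as \eqref{2eq:partsum}, since the sum is dominated by its last term $1/|\pi_{n^*}|$ with $|\pi_{n^*}|\ge u$. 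Combining this with \eqref{2eq:psxs} gives
\[
\sum_n E_{x_0}[D_n]\le C\int_K g_m(x)\,\nu(|x-x_0|)\,dx,
\qquad
g_m(x):=\int_K\frac{|h_m(x)-h_m(y)|}{|x-y|^d}\,dy .
\]
By hypothesis 2, $g_m\in L^1$, so Lemma~\ref{lemma:plorset} makes the right-hand side finite outside a polar set $E^1_m$. By Tonelli, $\sum_n D_n<\infty$ a.s., so $D_n\to 0$ a.s., and hence
\[
\limsup_n\|h_m\|_{n,1,B}=2\int_0^T h_m(B(s))\,ds \qquad \text{a.s.}
\]
The delicate point is exactly this summation over $n$ of the short-time kernel $\int_0^{|\pi_n|}p_u\,du$ weighted by $|\pi_n|^{-1}$. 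If the crude bound above fails, I would instead keep $\Delta^n_i$ inside the kernel and use the ratio bound $t^n_{i+1}/t^n_i\le C$ to compare the sum over $i$ with a time integral.

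\textbf{Step 3: smallness of the continuous term.} We choose $m_k$ with $\|h_{m_k}\|_{L^1}\le 2^{-k}$ and set $H:=\sum_k h_{m_k}\in L^1$. By \eqref{2eq:psx},
\[
\sum_k E_{x_0}\int_0^{T}h_{m_k}(B(s))\mathbf 1_K(B(s))\,ds\le C\int H(x)\,\nu(|x-x_0|)\,dx ,
\]
which is finite outside a polar set $E^2$ by Lemma~\ref{lemma:plorset}. Hence $\int_0^T h_{m_k}(B(s))\,ds\to0$ a.s. Taking $E:=E^2\cup\bigcup_{k,l}E^1_{m_k}$, a countable union of polar sets and hence polar, yields
\[
\lim_k\limsup_n\|h_{m_k}\|_{n,1,B}=0 \qquad \mathbb P_{x_0}\text{-a.s. for every } x_0\notin E.
\]
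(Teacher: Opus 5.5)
Your argument is essentially the paper's proof. It uses the same Markov-property bound on each partition interval and extracts the same factor $M_1/|\pi_n|$ via Assumption~\ref{ass:partitioncond} and \eqref{2eq:ptnix}. It sums over $n$ geometrically via \eqref{2eq:partsum} and \eqref{2eq:psxs}, obtaining the same kernel $|x-y|^{-d}$. It takes exceptional sets from Lemma~\ref{lemma:plorset} and uses the same summable subsequence $\sum_k h_{m_k}\in L^1$ to force $\int_0^T h_{m_k}(B(s))\,ds\to 0$. Two points are glossed over, and the paper treats both explicitly.

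\emph{The first interval.} Because $B(t^n_0)=x_0$ is deterministic, there is no density $p_{t^n_0}$ and \eqref{2eq:ptnix} does not apply at $i=0$. So your bound on $E_{x_0}[D_n]$ is not justified for that term, and summability in $n$ of the $i=0$ contribution does not follow from Lemma~\ref{lemma:plorset}. The paper instead splits off $\int_0^{t^n_1}\bigl(h(x_0)-h(B(t))\bigr)\,dt$ and shows it tends to zero pathwise. This uses only that $h$ is finite-valued at $x_0$, that $t^n_1\le|\pi_n|\to 0$, and that $\int_0^T h(B(t))\,dt<\infty$ almost surely.

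\emph{The right-endpoint sums.} These are not ``exactly'' symmetric to the left ones. Conditioning at time $s$ puts the outer heat kernel $p_s(x-x_0)$ at non-partition times, so an extra comparison argument would be needed, and it breaks down again on the first interval. You do not need the identity $\limsup_n\|h_m\|_{n,1,B}=2\int_0^T h_m(B(s))\,ds$. The paper uses $\Delta^n_{i-1}\le M_1\Delta^n_i$ to get
\[
\|h\|_{n,1,B}\le (1+M_1)\sum_i h(B(t^n_i))\,\Delta^n_i+|\pi_n|\,h(B(T)).
\]
Combined with convergence of the left sums, this upper bound is all the conclusion requires.
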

\begin{proof}
Write
$$
\Delta_i^n:=t_{i+1}^n-t_i^n,\qquad
\delta_n:=|\pi_n|=\max_i\Delta_i^n,\qquad
\underline{\delta}_n:=\min_i\Delta_i^n.
$$
By Assumption 1,
$$
\frac{\delta_n}{\underline{\delta}_n}\leq M_1,
$$
and, by Assumption 2, the sequence $(\delta_n)_{n\geq 1}$ decreases at least geometrically.
We choose finite-valued Borel representatives of all the functions $h_m$, redefining them on Lebesgue-null sets if necessary.
Since $\|h_m\|_{L^1(\mathbb{R}^d)}\to 0$, we may choose a subsequence, still denoted by $(h_{m_k})_{k\geq 1}$, such that
$$
\sum_{k=1}^{\infty}\|h_{m_k}\|_{L^1(\mathbb{R}^d)}<\infty.
$$
Let
$$
\widetilde{h}:=\sum_{k=1}^{\infty}h_{m_k} \in L^1(\mathbb{R}^d).
$$
Notice that $\widetilde{h}$ need not have compact support.
For $R\geq 1$, let
$$
K_R:=\overline{B(0,R)},\qquad {\rm and}\quad 
\Omega_R:={B([0,T])\subset K_R}.
$$
Using continuity of Brownian paths,
$$
\mathbb{P}_{x_0}\left(\bigcup_{R\geq 1}\Omega_R\right)=1.
$$
We first prove that
$$
\int_0^T\widetilde{h}(B(t))\,dt<\infty
$$
almost surely, outside a polar set of starting points.
For every $R\geq 1$, the function $\widetilde{h}\mathbf{1}_{K_R}$ belongs to $L^1(\mathbb{R}^d)$ and has compact support. By Lemma \ref{lemma:plorset} there exists a polar set $E_R^0$ such that
$$
\int_{K_R}\widetilde{h}(x)\ \nu(|x-x_0|)\,dx<\infty
$$
for every $x_0\notin E_R^0$.
Using estimate (46), we obtain
\begin{eqnarray}
    \mathbb{E}_{x_0}\left[
\int_0^T
\widetilde{h}(B(t))\mathbf{1}_{K_R}(B(t))\,dt
\right]
&=
\int_{K_R}\widetilde{h}(x)
\left(\int_0^T p_t(x-x_0)\,dt\right)dx \nonumber\\
&\leq
C_R
\int_{K_R}\widetilde{h}(x)\nu(|x-x_0|)\,dx
<\infty.
\end{eqnarray}
Consequently,
$$
\int_0^T
\widetilde{h}(B(t))\mathbf{1}_{K_R}(B(t))\,dt
<\infty
$$
almost surely under $\mathbb{P}_{x_0}$.
Let
$$
E^0:=\bigcup_{R\geq 1}E_R^0.
$$
The set $E^0$ is polar. For every $x_0\notin E^0$, it follows that
$$
\int_0^T\widetilde{h}(B(t))\,dt<\infty
$$
almost surely, because almost every Brownian path belongs to $\Omega_R$ for some $R$.

We next prove the convergence of the Riemann sums. Fix $k\geq 1$ and write $h:=h_{m_k}$. Define
$$
L_n(h):=
\sum_{i=0}^{m(n)-1}
h(B(t_i^n))\Delta_i^n.
$$
For the intervals not beginning at zero, set
$$
A_n(h):=
\sum_{i=1}^{m(n)-1}
\int_{t_i^n}^{t_{i+1}^n}
\left(h(B(t_i^n))-h(B(t))\right)\,dt.
$$

Fix $R\geq 1$ and define
$$
G_{h,R}(x):=
\mathbf{1}_{K_R}(x)
\int_{K_R}
\frac{|h(x)-h(y)|}{|x-y|^d}\,dy.
$$
By the second assumption of the lemma,
$$
\int_{\mathbb{R}^d}G_{h,R}(x)\,dx
= \int_{K_R}\int_{K_R}
\frac{|h(x)-h(y)|}{|x-y|^d}\,dy\,dx
<\infty.
$$
Thus $G_{h,R}\in L^1(\mathbb{R}^d)$ and has compact support. By Lemma B.1, there exists a polar set $E_{h,R}$ such that
$$
\int_{K_R}G_{h,R}(x)\nu(|x-x_0|)\,dx<\infty
$$
for every $x_0\notin E_{h,R}$.
On the event $\Omega_R$, the Markov property and the joint density of $W(t_i^n)$ and $B(t)$ give
$$
\begin{aligned}
\mathbb{E}_{x_0}\left[
\mathbf{1}_{\Omega_R}|A_n(h)|
\right]
&\leq
\int_{K_R}\int_{K_R}
|h(x)-h(y)| 
\sum_{i=1}^{m(n)-1}
p_{t_i^n}(x-x_0)
\left(
\int_0^{\Delta_i^n}p_s(y-x)\,ds
\right)
dy\,dx.
\end{aligned}
$$

Since $\Delta_i^n\leq\delta_n$ and
$$
\Delta_i^n\geq\underline{\delta}_n
\geq\frac{\delta_n}{M_1},
$$
we have
$$
\begin{aligned}
&\sum_{i=1}^{m(n)-1}
p_{t_i^n}(x-x_0)
\int_0^{\Delta_i^n}p_s(y-x)\,ds \
&\leq
\frac{M_1}{\delta_n}
\left(
\sum_{i=1}^{m(n)-1}
p_{t_i^n}(x-x_0)\Delta_i^n
\right)
\int_0^{\delta_n}p_s(y-x)\,ds.
\end{aligned}
$$
By estimate \eqref{2eq:ptnix},
$$
\sum_{i=1}^{m(n)-1}
p_{t_i^n}(x-x_0)\Delta_i^n
\leq C_R\nu(|x-x_0|)
$$
for $x\in K_R$. Therefore,
$$
\begin{aligned}
\mathbb{E}_{x_0}\left[
\mathbf{1}_{\Omega_R}|A_n(h)|
\right]
&\leq
C_R
\int_{K_R}\int_{K_R}
|h(x)-h(y)|\nu(|x-x_0|) \
&\times
\frac{1}{\delta_n}
\left(
\int_0^{\delta_n}p_s(y-x)\,ds
\right)\,
dy\,dx.
\end{aligned}
$$

We now use the geometric decay of the mesh sizes. By Tonelli's theorem,
$$
\begin{aligned}
\sum_{n=1}^{\infty}
\frac{1}{\delta_n}
\int_0^{\delta_n}p_s(z)\,ds
&=
\int_0^{\delta_1}
p_s(z)
\sum_{{n:,\delta_n\geq s}}
\frac{1}{\delta_n}\,ds.
\end{aligned}
$$
For $s\in(0,\delta_1]$, let $N=N(s)$ be the largest integer such that $\delta_N\geq s$. By estimate (45),
$$
\sum_{{n:,\delta_n\geq s}}
\frac{1}{\delta_n}
=
\sum_{n=1}^{N}\frac{1}{\delta_n}
\leq
\frac{M_3}{\delta_N}
\leq
\frac{M_3}{s}.
$$
Consequently, estimate \eqref{2eq:psxs} yields
$$
\begin{aligned}
\sum_{n=1}^{\infty}
\frac{1}{\delta_n}
\int_0^{\delta_n}p_s(z)\,ds
&\leq
C\int_0^T\frac{p_s(z)}{s}\,ds \
&\leq
\frac{C}{|z|^d}.
\end{aligned}
$$
It follows that
\begin{eqnarray}
\sum_{n=1}^{\infty}
\mathbb{E}_{x_0}\left[
\mathbf{1}_{\Omega_R}|A_n(h)|
\right]
&\leq
C_R
\int_{K_R}\int_{K_R}
\frac{|h(x)-h(y)|}{|x-y|^d}
\nu(|x-x_0|)\,dy\,dx \nonumber\\
&=
C_R
\int_{K_R}
G_{h,R}(x)\nu(|x-x_0|)\,dx
<\infty.\nonumber
\end{eqnarray}
Hence, by Tonelli's theorem,
$$
\sum_{n=1}^{\infty}
\mathbf{1}_{\Omega_R}|A_n(h)|<\infty
$$
almost surely. In particular,
$ A_n(h)\longrightarrow 0 $
almost surely on $\Omega_R$.
It remains to treat the interval $[0,t_1^n]$, which was omitted because $W(0)=x_0$ is deterministic. Its contribution is
$$
\int_0^{t_1^n}
\left(h(x_0)-h(B(t))\right)dt.
$$
Since $t_1^n\leq\delta_n\to 0$, $h(x_0)<\infty$, and
$$
\int_0^T h(B(t))\,dt
\leq
\int_0^T\widetilde{h}(B(t))\,dt
<\infty,
$$
this contribution converges to zero almost surely. Therefore,
$$
L_n(h)\mathop{\longrightarrow}^{n\to\infty}
\int_0^T h(B(t))\,dt
$$
almost surely.
Taking a countable union over $k\geq 1$ and $R\geq 1$, we obtain a polar set $E^1$ such that, for every $x_0\notin E^1$, the preceding convergence holds simultaneously for all $h_{m_k}$.
Finally, by Assumption \ref{ass:partitioncond}
$$
\begin{aligned}
|h|_{n,1,B}
&=
\sum_{i=0}^{m(n)-1}
\left(
h(B(t_i^n))+h(B(t_{i+1}^n))
\right)\Delta_i^n \
&\leq
(1+M_1)L_n(h)+\delta_n h(B(T)).
\end{aligned}
$$
Thus,
$$
\limsup_{n\to\infty}\|h_{m_k}\|_{n,1,B}
\leq
(1+M_1)
\int_0^T h_{m_k}(B(t))\,dt.
$$
Moreover,
$$
\begin{aligned}
\sum_{k=1}^{\infty}
\int_0^T h_{m_k}(B(t))\,dt
&=
\int_0^T
\sum_{k=1}^{\infty}h_{m_k}(B(t))\,dt \
&=
\int_0^T\widetilde{h}(B(t))\,dt
<\infty
\end{aligned}
$$
almost surely. Hence,
$$
\int_0^T h_{m_k}(B(t))\,dt\longrightarrow 0.
$$
It follows that
$$
\lim_{k\to\infty}
\limsup_{n\to\infty}
|h_{m_k}|_{n,1,B}
=0
$$
almost surely.
The exceptional set
$ E:=E^0\cup E^1$
is polar, being a countable union of polar sets. This completes the proof.
\end{proof}

    \section{Proof of Theorem \ref{lemma:Sobinc}}\label{sec.Proof54}
\begin{proof}
Fix $s\in(0,1)$ such that $F\in W^{2+s,p}(\mathbb{R}^d)$. Let
$ F_m:=\eta_m*F,$
where $\eta_m(x)=m^d\eta(mx)$ and $\eta\in C_c^\infty(\mathbb{R}^d)$ is a standard mollifier. Since $F\in W^{2,1}(\mathbb{R}^d)$, we have
$$
F_m\in C_b^2(\mathbb{R}^d)\cap W^{2,1}(\mathbb{R}^d).
$$
Moreover,
$$
F_m(x)\mathop{\longrightarrow}^{m\to\infty} F(x)
$$
for every Lebesgue point $x$ of $F$.

Let $E_1$ be the set of non-Lebesgue points of $F$.  
Since $p>1$ and
\[
    F\in W^{2+s,p}(\mathbb R^d)
    \subset W^{2,p}(\mathbb R^d),
\]
 Theorem~\ref{theorem:Lebesgue} implies that $E_1$ is
$(2,p)$-polar. Since $2p>2$, the inclusion theorem for
Bessel exceptional classes \cite[Theorem~1]{AM}
implies that $E_1$ is $(1,2)$-polar, and hence polar for
Brownian motion (Def. \ref{chp2def:polar}).

Consequently, for every $x_0\notin E_1$, Brownian motion
starting from $x_0$ almost surely does not enter $E_1$ at
positive times. Since $x_0$ itself is a Lebesgue point of
$F$, it follows that
\[
    F_m(B(t))\longrightarrow F(B(t)),
    \qquad t\in[0,T],
\]
$\mathbb{P}_{x_0}$-almost surely.
 
It remains to prove convergence of the Hessians in the seminorm $\|\cdot\|_{W^0_{B,\pi}}$. Define
$$
u_m(x):=
\|\nabla^2F_m(x)-\nabla^2F(x)\|.
$$
Let
$$g^*_m(x)=M_1(|u_m|)(x), \quad N_m=\{x,\quad g^*_m(x)=+\infty\},$$
where $M_1$ is the truncated Hardy--Littlewood maximal operator introduced in Section~\ref{sec:Sobspace}. 
Since $u_m\in L^p(\mathbb{R}^d),\ p>1$, $N_m$ is a Lebesgue null set. We use the finite-valued Borel representatives
$$g_m(x)=g^*_m(x)\ 1_{N_m^c},\quad h_m(x):=g_m(x)^p.$$
These modifications do not change the $L^p$ or $W^{s,p}$ classes. However the maximal function estimate may change at the initial point $B(0)=x_0$ so the first partition interval will be treated separately.

Since mollification converges in $W^{2+s,p}(\mathbb{R}^d)$ and the matrix norm is a Lipschitz function,
$$
\|u_m\|_{L^p(\mathbb{R}^d)}
+
[u_m]_{W^{s,p}(\mathbb{R}^d)}
\longrightarrow 0.
$$
The operator $M_1$ is bounded on $L^p(\mathbb{R}^d)$. It is also bounded on $W^{s,p}(\mathbb{R}^d)$. For completeness, let $\tau_z f(x):=f(x+z)$. Translation invariance and sublinearity of $M_1$ give
$$
|\tau_z(M_1f)-M_1f|
\leq
M_1(|\tau_zf-f|)
$$
almost everywhere. Hence
$$
\|\tau_z(M_1f)-M_1f\|_{L^p(\mathbb{R}^d)}
\leq
C_p\|\tau_zf-f\|_{L^p(\mathbb{R}^d)}.
$$

Using translation invariance and sublinearity of
$M_1$ together with
 the translation characterization of the Sobolev--Slobodeckij
seminorm we obtain
$$
\begin{aligned}
[M_1f]_{W^{s,p}(\mathbb{R}^d)}^p
&=
\int_{\mathbb{R}^d}
\frac{
\left\|\tau_z(M_1f)-M_1f\right\|_{L^p(\mathbb{R}^d)}^p
}{
|z|^{d+sp}
}\,dz
\\
&\leq
C_p^p
\int_{\mathbb{R}^d}
\frac{
\left\|\tau_zf-f\right\|_{L^p(\mathbb{R}^d)}^p
}{
|z|^{d+sp}
}\,dz
=
C_p^p[f]_{W^{s,p}(\mathbb{R}^d)}^p.
\end{aligned}
$$
Therefore,
$$
[M_1f]_{W^{s,p}(\mathbb{R}^d)}
\leq
C_p[f]_{W^{s,p}(\mathbb{R}^d)}.
$$
Consequently,
$$
\|g_m\|_{L^p(\mathbb{R}^d)}
+
[g_m]_{W^{s,p}(\mathbb{R}^d)}
\mathop{\longrightarrow}^{m\to\infty} 0,\qquad 
\|h_m\|_{L^1(\mathbb{R}^d)}
=
\|g_m\|_{L^p(\mathbb{R}^d)}^p
\mathop{\longrightarrow}^{m\to\infty} 0.
$$
We next verify the second hypothesis of Lemma~B.2. Let $K\subset\mathbb{R}^d$ be compact and let $p'=p/(p-1)$. 
Since $g_m\geq 0$,
$$
|g_m(x)^p-g_m(y)^p|
\leq
p\left(
g_m(x)^{p-1}+g_m(y)^{p-1}
\right)
|g_m(x)-g_m(y)|.
$$
By H\"older's inequality on $K\times K$,
$$
\begin{aligned}
\int_K\int_K
\frac{|h_m(x)-h_m(y)|}{|x-y|^d}\,dx\,dy
&\quad\leq
p
\left(
\int_K\int_K
\frac{
\left(
g_m(x)^{p-1}+g_m(y)^{p-1}
\right)^{p'}
}{
|x-y|^{d-sp'}
}\,dx\,dy
\right)^{1/p'}
\\
&\qquad\times
\left(
\int_K\int_K
\frac{
|g_m(x)-g_m(y)|^p
}{
|x-y|^{d+sp}
}\,dx\,dy
\right)^{1/p}.
\end{aligned}
$$
Since $sp'>0$, the kernel $|x-y|^{-d+sp'}$ is locally integrable. Thus
$$
\begin{aligned}
&\int_K\int_K
\frac{
\left(
g_m(x)^{p-1}+g_m(y)^{p-1}
\right)^{p'}
}{
|x-y|^{d-sp'}
}\,dx\,dy
\quad\leq
C_{K,p,s}\|g_m\|_{L^p(K)}^p.
\end{aligned}
$$
It follows that
$$
\int_K\int_K
\frac{|h_m(x)-h_m(y)|}{|x-y|^d}\,dx\,dy
\leq
C_{K,p,s}
\|g_m\|_{L^p(K)}^{p-1}
[g_m]_{W^{s,p}(K)}
<\infty.
$$
Hence the sequence $(h_m)_{m\geq1}$ satisfies both assumptions of Lemma~B.2.

Lemma~B.2 therefore yields a subsequence $(m_k)_{k\geq1}$ and a polar set $E_2\subset\mathbb{R}^d$ such that, for every $x_0\notin E_2$,
$$
\lim_{k\to\infty}
\limsup_{n\to\infty}
\|h_{m_k}\|_{n,1,B}
=
0,
$$
$\mathbb{P}_{x_0}$-almost surely. We now pass to this subsequence and, to simplify notation, write again $F_m$, $u_m$, $g_m$, and $h_m$.
Since $u_m\in L^1(\mathbb{R}^d),$ Lemma \ref{lemma:plorset} gives  for every $m$ a polar set $E_{3,m}$ such that for every $x_0\notin E_{3,m}$
$$ \int_{\mathbb{R}^d} |u_m(z)|\ \nu(|z-x_0|)\ dz < \infty.$$
Then the set
$E_3=\cup_{m\geq 1} E_{3,m}$ is polar. Because positive partition times $t^n_i>0$ for $i,n\geq 1$ form a countable (deterministic) set and Brownian motion has density at all positive times, $$
\mathbb{P}_{x_0}\left(
B(t_i^n)\notin\bigcup_{m\geq1}N_m
\text{ for every }n\geq1,\ 1\leq i\leq m(n)
\right)=1.
$$
Thus the maximal-function estimates remain valid at every positive
partition point $t^n_i>0$.

Let $q=p/(p-1)$.  By Assumption~\ref{ass:partitioncond2},
$$
|\pi_n|
\leq
|\pi_1|M_2^{-(n-1)},\quad{\rm so}\quad
\sum_{n=1}^{\infty}|\pi_n|<\infty.
$$
So by Lemma~\ref{lemma:qvar},
$$
\sup_{n\geq1}
\|\rho_{n,B}\|_{L^q([0,T])}
<\infty
$$
almost surely. 
Fix $x_0\notin E_3$. Write $b^n_i=B(t^n_i), r^n_i=B(t^n_{i+1})-B(t^n_i)$ and let $Q^n_0$ be the ball associated with the first increment $r^n_0$.
For $i\geq 1$ the argument leading to \eqref{2eq:fLMf} gives
$$ \int_{\mathbb{R}^d} |u_m(z)|L^{n,i}_B(z) dz\leq C_d \|r^n_i\|^2(g_m(b^n_i)+ g_m(b^n_{i+1}))$$ 
For $i=0$ retain the kernel centred at $x_0$ explicitly
$$ \int_{\mathbb{R}^d} |u_m(z)|L^{n,0}_B(z) dz\leq C_d J_{m,n}(x_0)+ C_d \|r^n_0\|^2g_m(b^n_1)$$
where $$J_{m,n}(x_0)= \int_{Q^n_0} \frac{|u_m(z)|}{|z-x_0|^{d-2}}dz$$
Since $Q^n_0\subset B(x_0,|r^n_0|)$ shrinks to zero as $n\to \infty$ and $\|z-x_0\|^{2-d}\leq C_d \nu(\|z-x_0\|)$ on small balls, the definition of $E_3$ implies $J_{m,n}(x_0)\to 0$ for every fixed $m$. 
Define
$$
\widetilde H_{m,n}(t)
:=
g_m(b_1^n)\mathbf{1}_{[t_0^n,t_1^n)}(t)
+
\sum_{i=1}^{m(n)-1}
\left(
g_m(b_i^n)+g_m(b_{i+1}^n)
\right)
\mathbf{1}_{[t_i^n,t_{i+1}^n)}(t).
$$
Summing over $i$ gives
$$
\int_{\mathbb{R}^d}|u_m(z)|L_B^{\pi_n}(z)\,dz
\leq
C_dJ_{m,n}(x_0)
+
C_d\int_0^T\widetilde H_{m,n}(t)\rho_{n,B}(t)\,dt,
$$
Since
$$
\|\widetilde H_{m,n}\|_{L^p([0,T])}
\leq
C_p\|g_m\|_{n,p,B}
=
C_p\|h_m\|_{n,1,B}^{1/p},
$$
applying H\"older's inequality to all remaining endpoint terms and using Lemma \ref{lemma:qvar} we obtain
$$ \|u_m\|_{W^0_{B,\pi}}\leq C_{p,B}\left( \mathop{\lim\sup}_{n\to\infty} \|h_m\|_{n,1,B}\right)^{1/p}$$
The conclusion of Lemma~B.2 therefore implies
$$
\forall x_0\notin E_2\cup E_3,\quad \mathbb{P}_{x_0}\left(\ \|
\nabla^2F_m-\nabla^2F
\|_{W^0_{B,\pi}}
\longrightarrow 0\ \right)=1
$$
Combining this convergence with
$$
F_m(B(t))\mathop{\longrightarrow}^{m\to\infty} F(B(t)),
\qquad t\in[0,T],
$$
we conclude that
$$
\forall x_0\notin E_1\cup E_2\cup E_3,\quad \mathbb{P}_{x_0}\left(\ F\in W^2_{B,\pi}\ \right)=1.
$$
Finally,
$
E:=E_1\cup E_2 \cup E_3$
is polar. This proves the theorem.
\end{proof}

\end{appendices}

\end{document}